\documentclass[
onefignum,onetabnum]{siamonline250211}

\usepackage{amsmath,amsfonts,amssymb,amsbsy}
\usepackage{mathrsfs}           
\allowdisplaybreaks

\newtheorem{assumption}[theorem]{Assumption}

\newcommand{\sgn}{\operatorname{sgn}}

\newcommand{\norm}[1]{\| #1 \|}
\newcommand{\braket}[1]{\left\langle #1\right\rangle}

\newcommand{\bbE}{\mathbb E}
\newcommand{\bbP}{\mathbb P}
\newcommand{\bbR}{\mathbb R} 
\newcommand{\bbS}{\mathbb S}

 \newcommand{\cD}{\mathcal D}

\newcommand{\cI}{\mathcal I} \newcommand{\cJ}{\mathcal J}
\newcommand{\cK}{\mathcal K} 
 \newcommand{\cN}{\mathcal N}
\newcommand{\cO}{\mathcal O}

\newcommand{\cU}{\mathcal U} \newcommand{\cV}{\mathcal V}

\newcommand{\D}{{\mathcal D}}

\newcommand{\hiSRD}{{\sc hiSRD}}

\renewcommand{\epsilon}{\varepsilon}

\usepackage{tikz}
\usepackage{pgfplots}
\usepackage{pgfplotstable}
  \pgfplotsset{compat=newest}
\pgfplotsset{table/search path={../}}
\usepgfplotslibrary{fillbetween}
\usepackage{siunitx}
\graphicspath{{../}}

\usepackage[sort,nocompress]{cite}

\usepackage{enumitem}

\newsiamremark{remark}{Remark}
\newsiamremark{example}{Example}

\title{Infinite-dimensional spherical-radial decomposition for
probabilistic functions,
with application to constrained optimal control and Gaussian process regression\thanks{Partially supported by the U.S.\ Department of Energy, Office of Science Energy Earthshot Initiative under Award \#DE-SC0024721, and by the US National Science Foundation (NSF) under \#2411229.
}}

\author{Kewei Wang\thanks{Courant Institute School of Mathematics, Computing and Data Science, New York University, New York, USA, \email{kw3645@nyu.edu}, \email{stadler@cims.nyu.edu}} \and  Georg Stadler\footnotemark[2]}

\date{\today}

\begin{document}
\maketitle

\begin{abstract}
The spherical-radial decomposition (SRD) is an efficient method for estimating probabilistic functions and their gradients defined over finite-dimensional elliptical distributions. In this work, we generalize the SRD to infinite stochastic dimensions by combining subspace SRD with standard Monte Carlo methods. The resulting method, which we call hybrid infinite-dimensional SRD (\hiSRD) provides an unbiased, low-variance estimator for convex sets arising, for instance, in chance-constrained optimization. We provide a theoretical analysis of the variance of finite-dimensional SRD as the dimension increases, and show that the proposed hybrid method eliminates truncation-induced bias, reduces variance, and allows the computation of derivatives of probabilistic functions. We present comprehensive numerical studies for a risk-neutral stochastic PDE optimal control problem with joint chance state constraints, and for optimizing kernel parameters in Gaussian process regression under the constraint that the posterior process satisfies joint chance constraints.
\end{abstract}

\begin{keywords}
probability estimation, infinite stochastic dimension, spherical-radial decomposition, chance constraints, Monte Carlo, Gaussian processes, PDE optimal control
\end{keywords}

\begin{AMS}
90C15, 
65K10, 
65C20, 
49M41, 
60H35. 
\end{AMS}

\section{Introduction}
This work addresses optimization problems under uncertainty, specifically the approximation of a class of probabilistic functions with infinite-dimension\-al underlying random variables. Such probability functions arise, for example, in risk-averse stochastic optimization, including value-at-risk objectives and chance-constrained optimization.

Most stochastic optimization problems common in economics, operations research, or statistics use a moderate number of stochastic variables and fast-to-evaluate (although often strongly nonlinear) objectives. Stochastic optimization governed by infinite-dimensional constraints, such as partial differential equations (PDEs), introduces additional difficulties. These stem from the infinite-dimensional nature of the operators, the resulting high-dimensional problems after discretization, and the inherently large or even infinite stochastic dimensions.
Analyzing the theoretical behavior of these problems and constructing approximations requires synthesizing methods from several fields, including stochastic optimization, uncertainty quantification, PDE discretization and numerical solvers, and PDE-constrained optimization \cite{HeKo-Acta25}.

Specifically, we consider stochastic optimization problems with probabilistic constraint of the following form:
\begin{equation}\label{eq:Jopt}
        \min_{u\in U_{\mathrm{ad}}} \quad  \cJ(u) \quad 
        \mathrm{s.t.} \quad   \varphi(u)\geq p,
\end{equation}
where $\cJ:U_{\text{ad}}\mapsto \bbR$ is the objective, $u$ is the optimization variable, $p\in [0,1]$, and the probability function is defined as
\begin{equation}\label{Eq:phi_u}
    \varphi(u):=\bbP(g(u,{\xi})\leq 0),
\end{equation}
where $g:U_\text{ad}\times \mathscr{H}\to \mathbb{R}$ is convex (and typically non-smooth) in $\xi$, which is an elliptical random variable taking values in a Hilbert space $\mathscr H$. A primary challenge addressed in this work arises when $\xi$ is a function on a spatial domain $\D\subset \mathbb R^n$ that requires an \emph{infinite} series:
\begin{equation}\label{eq:inf-xi}
\xi(x) = \bar\xi(x) + \sum_{k=1}^\infty \xi_k \phi_k(x), \quad x\in \D, 
\end{equation}
where $\phi_k(\cdot)$ are fixed functions and the coefficients $\xi_k$ are scalar random variables. For instance, \eqref{eq:inf-xi} may occur from a Karhunen-Lo\`eve (KL) expansion. Understanding the properties of \eqref{Eq:phi_u} and efficiently approximating both $\varphi(u)$ and its gradient $\nabla \varphi(u)$ are essential to solve \eqref{eq:Jopt}. In this work, we develop methods to approximate \eqref{Eq:phi_u} and its gradient for variables $\xi$ of the form \eqref{eq:inf-xi}, for example, when $\xi$ is a Gaussian random field. In \cref{sec:intro-examples}, we present two concrete examples of \eqref{Eq:phi_u} arising in chance-constrained stochastic optimization.

\subsection{Approach}
Evaluating \eqref{Eq:phi_u} requires high-dimensional integration over a set, which is usually done using Monte Carlo methods. For \emph{finite-dimensional} elliptical distributions and sets that arise, for example, in chance-constrained optimization, transforming the random variable into spherical coordinates proves very useful. Approximating $\varphi(u)$ then amounts to performing the integration by first uniformly sampling points on the unit sphere and then integrating exactly along the resulting radial directions.
The resulting approach, called spherical-radial decomposition (SRD), allows the computation of gradients of $\varphi(\cdot)$, enables the use of low-discrepancy Monte Carlo sequences on the unit sphere, and often results in a strictly lower variance estimator compared to the standard Monte Carlo sampling \cite{AcHe-SIAM14,AcHe-SIAM17,HeStWe-SIAM25}.

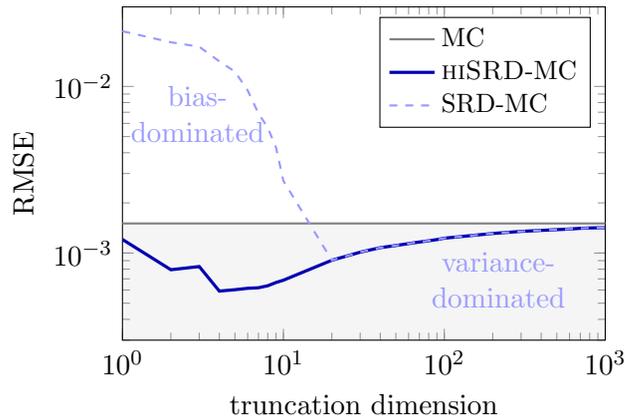
\begin{figure}[tb]
    \centering
    \begin{tikzpicture}
        \begin{loglogaxis}[
            xlabel={truncation dimension},
            ylabel={RMSE},
            xmin=1, xmax=1000,
            ymin=3e-4, ymax=3e-2,
            width=8cm, height=6cm,
            legend pos=north east,
            legend cell align={left},
            legend style={font=\small},
            grid style=dotted,
        ]
        \addplot[name path=MC, color=black!50!white, mark=none, thick]
            table[x=k, y=MC] {data/hiSRD.txt};
            \addlegendentry{MC}
        \path[name path=axis] (axis cs:1,1e-4) -- (axis cs:1000,1e-4);
        \addplot[black, opacity=0.04, on layer=axis background, forget plot] fill between[of=MC and axis];   
        \addplot[color=black!30!blue, mark=none, very thick]
            table[x=k, y=hiSRD_MC] {data/hiSRD.txt};
            \addlegendentry{\hiSRD-MC}            
        \addplot[color=blue!40, mark=none, thick, dashed]
            table[x=k, y=SRD_MC] {data/hiSRD.txt};
            \addlegendentry{SRD-MC}
        \end{loglogaxis}
        \node[color=blue!40,align=center] at (1,3) {bias-\\dominated};   
        \node[color=blue!40,align=center] at (5,0.8) {variance-\\dominated};   
    \end{tikzpicture}
    \caption{Comparison of root mean squared error (RMSE; $y$-axis) versus stochastic truncation dimension ($x$-axis) for standard Monte Carlo (gray horizontal line), finite-dimensional SRD (dashed) and for the proposed method (solid). Results are for Gaussian process example detailed in \cref{SEC:NumGPR}, but with 1000 repeats to estimate the RMSE.
    }\label{fig:intro}
\end{figure}

When applying SRD to \emph{infinite-dimensional} random variables, such as Gaussian random fields, the standard approach relies on truncating the random variable expansion. However, this truncation introduces a bias. While increasing the truncation dimension reduces bias, the resulting impact on the SRD estimator is not well understood. In this work, we analyze how high stochastic dimensions deteriorates the variance reduction effect of SRD, and propose a hybrid approach combining subspace SRD with Monte Carlo sampling. This hybrid formulation eliminates truncation-induced bias while preserving the differentiability and variance-reduction characteristics of SRD. The typical behavior of the resulting estimator is shown in
in \cref{fig:intro}. As can be seen, the proposed hybrid estimator remains unbiased regardless of the truncation space dimension and consistently outperforms standard Monte Carlo methods, which additionally do not provide gradients.

\subsection{Related literature}

Various methods have been proposed to approximate probabilistic functions of the form \eqref{Eq:phi_u}, particularly those originating from (joint) chance constraints. Besides the SRD, which is the main focus of this work, proposed methods include kernel density estimation \cite{CaCeSaTrZi-OCAM18}, sample average approximations and mixed-integer programming
\cite{LuShNe-MP10, PaAhSh-JOTA09},
and feasible set approximation and smoothing methods \cite{NeSh-SIAM07, HoYaZh-OR11, PeLuWa-SIAM20}.

The work most closely related to this paper focuses on stochastic, chance-constrained optimization governed by PDEs with uncertain variables \cite{FarshbafShakerGugatHeitschEtAl20, HeStWe-SIAM25, geiersbach2025numerical, geletu2020}.
In \cite{FarshbafShakerGugatHeitschEtAl20}, the authors study the control of the linear wave equation with uncertain initial data represented as an infinite Fourier series. Their numerical implementation truncates this series to permit using a finite-dimensional SRD; the authors also address how this truncation may be adjusted in relation to the accuracy of the PDE discretization. In \cite{geiersbach2025numerical, HeStWe-SIAM25}, the authors study stochastic optimization problems governed by elliptic PDEs and constrained by joint chance state constraints. In these works, the PDE solution depends affinely on the stochastic variable, which allows the use of SRD in the numerical implementation. Because the stochastic variables are expressed as sums of finitely many terms, a finite-dimensional SRD is employed. In \cite{geletu2020}, the authors study elliptic PDEs with finite-dimensional stochastic variables under individual chance constraints, which are typically more tractable than joint chance constraints.

Our problem formulation is also relevant to Gaussian process regression, where kernel functions usually depend on hyperparameters that are tuned based on observed data. If one further requires the posterior process to satisfy pointwise bound constraints jointly with a certain probability, the resulting optimization problem can be cast as \eqref{eq:Jopt}, \eqref{Eq:phi_u}, where the optimization is now governed by Gaussian process regression rather than a PDE. For a general overview of constrained Gaussian processes, we refer to~\cite{SwGuFrSaJa-JMLMC20}. For approaches to incorporate typically individual (rather than joint) pointwise bound constraints within process regression, or to sample from truncated Gaussian distributions, we refer to~\cite{PeYaZh-TAML20, KoYa-25, VeMa-12}.

\subsection{Contributions and limitations}
In this work, we make the following contributions.
(1) We generalize the SRD to infinite-dimensional distributions, resulting in an unbiased, low-variance estimator for convex sets arising in chance-constrained optimization.
(2) We provide a theoretical analysis of the variance of finite-dimensional SRD as the stochastic dimension increases.
(3) We generalize existing differentiability results for finite-dimensional SRD to the proposed method.
(4) We present comprehensive numerical studies for an elliptic PDE control problem with joint chance state constraints, and for Gaussian process regression with joint chance constraints.

The proposed approach also has some limitations.
(1) The SRD framework is limited to elliptical distributions, such as multivariate Gaussian, Laplace, or Student's $t$-distributions.
(2) The sets whose probability we seek must be convex, and are typically the intersection of (possibly infinitely many) half-spaces.
(3) While we show how to lower variance and eliminate truncation-induced bias, the proposed approach still relies on (quasi) Monte Carlo sampling and therefore has the typical Monte Carlo convergence rates.

\subsection{Examples}\label{sec:intro-examples}
We present two examples that fit the form \eqref{eq:Jopt}, \eqref{Eq:phi_u} and \eqref{eq:inf-xi}. These examples are studied numerically in~\cref{SEC:NumCtrl,SEC:NumGPR}.

\begin{example}[PDE optimal control] \label{ex:optcont}
We consider a risk-neutral stochastic optimal control problem governed by an elliptic PDE:
\begin{equation}\label{eq:ex:opt-contr}
    \begin{aligned}
        \min_{u\in U_{\mathrm{ad}}} \cJ(u), \quad \text{ where } \cJ(u) := \mathbb E \left[\tilde\cJ(u,y,\xi)\right].   
    \end{aligned}
\end{equation}
Here, $u$ is the control (typically in a closed, convex subset $U_\mathrm{ad}$ of square-integrable functions), and the random variable $\xi$ takes values
in a Hilbert space $\mathscr H$. The state variable $y(\xi)$ and the control $u$ satisfy the equation
\begin{equation}\label{eq:redstate}
A y(\xi) + B \xi + Cu  = f,
\end{equation}
where $A:Y\mapsto H^{-1}(\D)$ with $Y$ a subspace of $H^1(\D)$ is a linear invertible elliptic operator,
$B:\mathscr
H\mapsto H^{-1}(\D)$ and $C:U_{\!\text{ad}}\to L^2(\D)$ are bounded linear operators,
and $f\in L^2(\D)$. In \eqref{eq:ex:opt-contr}, the state variable $y$ is considered a function of $u$ and $\xi$ through the solution of \eqref{eq:redstate}.
Joint chance state constraints with lower and upper bounds $\underline y$ and $\overline y$ mean that
\begin{equation}\label{eq:chance}
  \mathbb P(\omega\in \Omega \,| \,\underline y(x) \le y(x, \xi,u)\le \overline y(x)
  \text{ for a.\,a.\ } x\in \D) \ge p.
\end{equation}
Variants of this problem have been studied in \cite{HeStWe-SIAM25,geiersbach2023optimality,geiersbach2025numerical,Kouri2023}, and this formulation fits the form \eqref{eq:chance} with $g(u,\xi) := \max_{x\in\D} \{\underline y(x) - y(x;\xi), y(x;\xi)-\overline y\}$\footnote{Since we do not assume continuity of $y$, the $\max$-function must be properly interpreted as detailed in \cite{HeStWe-SIAM25}.}, where $y(\omega)$ is considered a function of the control $u$ through the solution of PDE \eqref{eq:redstate}.
\end{example}

\begin{example}[Gaussian process regression]\label{ex:GPR}
Assume that we desire, by adjusting the hyperparameters of the kernel, that the realizations of the Gaussian process arising from regression satisfy bound constraints with a given high probability \cite{SwGuFrSaJa-JMLMC20, PeYaZh-TAML20}. Consider a (prior) Gaussian process $\xi_\text{pr}\sim \mathcal N(\xi_0,\mathcal K_u)$ over domain $\D$ taking values in a Hilbert space $\mathscr{H}$, with a covariance kernel $\mathcal K_u$ that depends on to-be-determined kernel parameters $u$, such as a length scale, variance and exponent. Suppose that we have observations $\boldsymbol{y}=(y^{(1)},\dots,y^{(N)})\in\bbR^N$ at locations $\boldsymbol{X}=(x^{(1)},\dots,x^{(N)})\in\cD^N$ and use a Gaussian process to fit these data. To tailor the kernel function, we minimize the negative log-likelihood as a function of the kernel parameters, i.e.,
\begin{equation}\label{Eq:ObjFunc}
    \min_{u}\quad \cJ(u):=\frac{1}{2}\left( (\boldsymbol y-\boldsymbol \xi_0)^T K_u^{-1} (\boldsymbol y-\boldsymbol\xi_0) + \log |K_u| + N\log(2\pi) \right),
\end{equation}
where $\boldsymbol\xi_0=(\xi_0(x^{(1)}),\ldots,\xi_0(x^{(N)}))$ and $K_u=\cK_u(\boldsymbol{X},\boldsymbol{X})\in\bbR^{N\times N}$ is the covariance matrix evaluated at $\boldsymbol{X}$. We require that the posterior Gaussian process $\xi_\text{post}\sim\cN(\xi^\star(u),\cK^\star_u)$ satisfies a joint chance constraint with lower and upper bounds $\underline\xi$ and $\overline{\xi}$ with probability $p\in(0,1)$:
\begin{equation}\label{Eq:ChanceConstrGPR}
    \varphi(u):=\mathbb P(\omega|\underline\xi(x)\le \xi_{\text{post}}(x,\omega;u)\le \overline\xi(x) \text{ for a.a.~} x\in \D) \ge p.
\end{equation}

To write \eqref{Eq:ChanceConstrGPR} in the form \eqref{Eq:phi_u}, we choose a reference centered Gaussian process $\xi_{\mathrm{ref}}:=\xi_{\mathrm{post}}(u_0)-\xi^\star(u_0)$ for some fixed kernel parameters $u_0$. Since $\xi_{\mathrm{post}}(u)$ takes values in $\mathscr{H}$, the posterior with different kernel parameters can be recovered by the affine transformation 
\begin{equation}\label{Eq:GPtransform}
    \xi_{\text{post}}(u)=A(u)\xi_{\mathrm{ref}}+\xi^\star(u),
\end{equation}
where the operator $A(u):\mathscr{H}\to \mathscr{H}$ satisfies $A(u) \cK_{u_0}^\star A(u)^* = \cK_u^\star$,
with the $*$ denoting the adjoint operator.
The constraint \eqref{Eq:ChanceConstrGPR} fits the form of \eqref{Eq:phi_u}, with
$g(u,\xi_{\mathrm{ref}})=\max_{x\in\cD}{\left\{ \left( \underline{\xi}(x)-\xi_{\text{post}}(x;u) \right), \left( \xi_{\text{post}}(x;u)-\overline{\xi}(x) \right)\right\}}$.

\end{example}

\subsection{Notation}
We use the abbreviation SRD for the existing spherical-radial decomposition in finite dimensions. Our extension to infinite stochastic dimension is called \hiSRD, short for \emph{hybrid infinite-dimensional spherical radial decomposition}.
To distinguish vectors from scalars or scalar functions, we use a bold font. We generally denote the spatial domain by $\cD$ and use $\Omega$ for the random space. 
When we use Karhunen-Lo\`eve expansions to describe random variables, we use $K$ for the number of expansion terms, and use $N$ for the number of samples in a Monte Carlo approximation. Upon discretization, we use $M$ to denote the number of points in $\cD$ where the state constraints are evaluated by.

\section{Dimension dependency of spherical-radial decomposition}
In this section, we summarize the SRD in finite dimensions, discuss its properties for estimating probability functions, and prove that its variance reduction properties deteriorate with increasing dimension.

\subsection{Review of finite-dimensional SRD}\label{subsec:finite-dim-SRD}
The SRD is an efficient method for estimating probabilistic functions with elliptical distributions, and it can be applied to optimization problems of the form~\eqref{eq:Jopt}. An elliptical random vector $\boldsymbol{\zeta} \in \bbR^n$ admits the decomposition
$\boldsymbol{\zeta} = \bar{\boldsymbol{\zeta}} + \tau L_K \boldsymbol{\nu}_K$,
where $\bar{\boldsymbol{\zeta}} \in \bbR^n$, $L_K \in \bbR^{n \times K}$, $\boldsymbol{\nu}_K \sim \cU(\bbS^{K-1})$ is a random vector uniformly distributed on the unit sphere, and $\tau\geq 0$ is a scalar radial random variable independent of $\boldsymbol{\nu}_K$. This decomposition is known as the SRD of $\boldsymbol{\zeta}$. Although the stochastic dimension $K$ is typically chosen as a truncation dimension for low-rank approximations of $\boldsymbol{\zeta}$,
it is not necessarily bounded by the spatial dimension $n$, and we permit over-parameterized representations where $K>n$.

While the SRD is valid for any elliptical distribution by correspondingly selecting the distribution of $\tau$, here we focus on multivariate Gaussian distributions. Specifically, consider $\boldsymbol{\zeta} \sim \cN(\bar{\boldsymbol{\zeta}}, \Sigma)$, with a possibly degenerate covariance $\Sigma=L_K L_K^T$. In this case, the radial component $\tau$ is distributed as $\chi_K$, the Chi distribution with $K$ degrees of freedom. The probabilistic function we want to estimate is
\begin{equation}\label{Eq:probfunc}
    \varphi(u) := \bbP(g(u,\boldsymbol{\zeta})\leq 0),
\end{equation}
where $g:U_{\mathrm{ad}}\times \bbR^n\to\bbR$, and $g(u,\boldsymbol{\zeta})$ is convex in $\boldsymbol{\zeta}$ for all $u\in U$. Using SRD, $\varphi(u)$ 
can be represented as
\begin{equation}\label{eq:phi(u)}
    \varphi(u)=\int_{\boldsymbol{v}\in\bbS^{K-1}}\mu_{\chi_K}\left( \left\{ r\geq 0: g(u,\bar{\boldsymbol{\zeta}}+r L_K\boldsymbol{v}) \right\} \right)d\mu_{\cU} (\boldsymbol{v}).
\end{equation}
A typical assumption is that $g(u,\bar{\boldsymbol{\zeta}})<0$, i.e., the mean is inside the set whose probability we compute. This is naturally satisfied if $\varphi(u)\geq \frac{1}{2}$ and there exists a Slater point $\boldsymbol{z}\in\bbR^n$, s.t.\ $g(u,\boldsymbol{z})<0$. Then, the probabilistic function can be further simplified to
\begin{equation}\label{Eq:probfuncsimp2}
    \varphi(u)=\int_{\boldsymbol{v}\in\bbS^{K-1}}F_{\chi_K}\left( \rho(u,\boldsymbol{v}) \right)d\mu_{\cU} (\boldsymbol{v}),
\end{equation}
where $F_{\chi_K}$ is the cumulative distribution function of $\chi_K$, and
\begin{equation*}
    \rho(u,\boldsymbol{v}):=\sup_{r\geq 0}{\left\{ g(u,\bar{\boldsymbol{\zeta}}+r L_K\boldsymbol{v})\leq 0 \right\}}
\end{equation*}
is the length of the ray originating from $\bar{\boldsymbol\zeta}$ remaining within the feasible set, where we use the convention $F_{\chi_K}(\infty)=1$. Thus, we can approximate~\eqref{Eq:probfuncsimp2} by drawing samples $\{\boldsymbol{v}_i\}_{i=1}^N$ from $\cU(\bbS^{K-1})$ to obtain
\begin{equation}\label{eq:phiN(u)}
    \varphi(u)\approx \tilde{\varphi}_N(u):=\frac{1}{N}\sum_{i=1}^N F_{\chi_K}\left( \rho(u,\boldsymbol{v}_i) \right).
\end{equation}

Compared to standard Monte Carlo sampling to approximate \eqref{Eq:probfunc}, the SRD offers several advantages \cite{HeStWe-SIAM25, AcHe-SIAM14, AcHe-SIAM17, AcPe-AMO22}: (1) it provides derivatives of $\tilde\varphi_N(u)$ or $\varphi(u)$; (2) it allows the use of quasi-Monte Carlo sequences to sample the unit sphere $\bbS^{K-1}$, which typically yields faster convergence; and (3) it has a provably lower variance in important cases. In \cref{sec:hiSRD} we show how the SRD can be generalized to infinite stochastic dimensions by combining SRD with standard Monte Carlo sampling, resulting in an unbiased estimator. This requires a generalization of \eqref{Eq:probfuncsimp2}, since we cannot in general assume that rays originate from within the feasible set. In addition, we have to adjust the gradient expressions and revisit the differentiability arguments. To eliminate the bias introduced by truncation, another approach is simply increasing $K$. Beyond the potential numerical stability issues, this introduces a trade-off regarding variance reduction, which we will analyze next before introducing the proposed method.

\subsection{Degeneration of variance reduction}
The variance of the SRD estimator is always bounded above by that of standard Monte Carlo sampling~\cite{AcHe-SIAM14}, but is known to be reduced in important cases \cite{HeStWe-SIAM25}. However, numerical experiments indicate that as the dimension $K$ increases, this variance reduction weakens, and the SRD variance converges to that of standard Monte Carlo.
In this section, we analytically study this behavior and determine the rate at which the advantage of SRD vanishes. For simplicity, we fix some $u\in U_{\text{ad}}$ throughout the section and remove the dependence of $\rho$ on it.

To isolate the effect of high dimensionality, we consider the case where the random variable is contained within an $r$-dimensional subspace, but we apply an SRD of dimension $K$, where $r \ll K$. Specifically, we assume that only the first $r$ columns of $L_K$ are nonzero, that is, $L_K=[L_r, 0]$. In what follows, we study the finite-dimensional SRD variance as $K\to\infty$.

For any $\boldsymbol{z}\in\bbR^K$, let $\boldsymbol{z}^{(r)}\in\bbR^r$ denote its first $r$ components. Since the random variable $\boldsymbol{\nu}_K\sim\cU(\bbS^{K-1})$ can be expressed as $\boldsymbol{\nu}_K = \frac{\boldsymbol{\zeta}_K}{\norm{\boldsymbol{\zeta}_K}_2}$ with $\boldsymbol{\zeta}_K\sim\cN(0,I_K)$, we have
\[
\|{\boldsymbol{\nu}_K^{(r)}}\|_2^2=\frac{\norm{\boldsymbol{\zeta}_K^{(r)}}_2^2}{\norm{\boldsymbol{\zeta}_K}_2^2}=\frac{\norm{\boldsymbol{\zeta}_K^{(r)}}_2^2}{\norm{\boldsymbol{\zeta}_K^{(r)}}_2^2+\bigl( \norm{\boldsymbol{\zeta}_K}_2^2-\norm{\boldsymbol{\zeta}_K^{(r)}}_2^2 \bigr)}.
\]
Since $\norm{\boldsymbol{\zeta}_K^{(r)}}_2^2$ and $\norm{\boldsymbol{\zeta}_K}_2^2-\norm{\boldsymbol{\zeta}_K^{(r)}}_2^2$ are independently distributed as $\chi_r^2$ and $\chi_{K-r}^2$, respectively, $\norm{\boldsymbol{\nu}_K^{(r)}}_2^2$ follows a $\mathrm{Beta} \left( \frac{r}{2},\frac{K-r}{2} \right)$ distribution. We can further write $\boldsymbol{\nu}_K^{(r)}$ as
\begin{equation*}
    \boldsymbol{\nu}_K^{(r)}=\sqrt{\tau_{r,K}}\boldsymbol{\nu}_r,
\end{equation*}
where $\tau_{r,K}:=\norm{\boldsymbol{\nu}_K^{(r)}}_2^2\sim \mathrm{Beta} \left( \frac{r}{2},\frac{K-r}{2} \right)$ and $\boldsymbol{\nu}_r:={\boldsymbol{\nu}_K^{(r)}}/{\norm{\boldsymbol{\nu}_K^{(r)}}_2} \sim \cU(\bbS^{r-1})$.

The following lemma shows that by projecting samples from a high-dimensional sphere onto a low-dimensional space and integrating in one direction, we obtain the same result as obtained from the corresponding sample on the low-dimensional sphere. This relates SRD across different spherical dimensions.

\begin{lemma}\label{LEMMA:mean_K}
    For any $r\leq K$ and $c\geq 0$, it holds that
    \begin{equation}\label{Eq:mean_tau}
        \bbE_{\tau_{r,K}} \left[ F_{\chi_K}\left( \frac{1}{\sqrt{\tau_{r,K}}}c \right) \right] =F_{\chi_r}(c).
    \end{equation}
\end{lemma}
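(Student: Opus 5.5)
We prove the identity with a probabilistic coupling that realizes both sides on one Gaussian vector, so no Beta or Chi density needs to be integrated. Take $\boldsymbol{\zeta}_K\sim\cN(0,I_K)$ and write $\boldsymbol{\zeta}_K^{(r)}$ for its first $r$ components, as in the discussion preceding the lemma. Set
\[
R_K:=\norm{\boldsymbol{\zeta}_K}_2\sim\chi_K,\qquad
\tau_{r,K}=\frac{\norm{\boldsymbol{\zeta}_K^{(r)}}_2^2}{\norm{\boldsymbol{\zeta}_K}_2^2}\sim\mathrm{Beta}\left(\tfrac r2,\tfrac{K-r}2\right).
\]
The key fact is that $\tau_{r,K}$ is independent of $R_K$. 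This holds because $\tau_{r,K}$ depends only on the direction $\boldsymbol{\zeta}_K/\norm{\boldsymbol{\zeta}_K}_2\sim\cU(\bbS^{K-1})$, and for a standard Gaussian the direction and the radius are independent. The same fact is used in the SRD decomposition of \cref{subsec:finite-dim-SRD}.

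Because $\tau_{r,K}$ and $R_K$ are independent, we can condition on $\tau_{r,K}$ and apply Fubini:
\[
\bbE_{\tau_{r,K}}\left[F_{\chi_K}\bigl(c/\sqrt{\tau_{r,K}}\bigr)\right]
=\bbE_{\tau_{r,K}}\left[\bbP\bigl(R_K\le c/\sqrt{\tau_{r,K}}\,\big|\,\tau_{r,K}\bigr)\right]
=\bbP\bigl(\sqrt{\tau_{r,K}}\,R_K\le c\bigr).
\]
By construction, $\sqrt{\tau_{r,K}}\,R_K=\norm{\boldsymbol{\zeta}_K^{(r)}}_2$. This is the norm of an $r$-dimensional standard Gaussian vector, so it is distributed as $\chi_r$. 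The right-hand side is therefore $F_{\chi_r}(c)$, which proves \eqref{Eq:mean_tau}.

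Three details need attention.
\begin{enumerate}
\item The case $\tau_{r,K}=0$ has probability zero when $r\ge1$. If it is treated explicitly, the convention $F_{\chi_K}(\infty)=1$ covers it.
\item The boundary cases $c=0$ and $r=K$ are trivial. When $r=K$ we have $\tau_{r,K}\equiv1$.
\item The main obstacle is justifying the direction–radius independence. We will cite it as a standard property of rotationally invariant distributions: the density of $\boldsymbol{\zeta}_K$ factorizes in polar coordinates. This is the same fact that underlies \eqref{eq:phi(u)}.
\end{enumerate}

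As a fallback, the identity can be checked directly. Differentiate both sides in $c$ and substitute the Beta and $\chi_K$ densities. The resulting integral over $\tau$ reduces to a Beta function, which yields the $\chi_r$ density. We would only carry this out if a fully analytic argument is preferred.
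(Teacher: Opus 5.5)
Your proposal is correct and follows essentially the same argument as the paper. Both proofs realize $\tau_{r,K}$ and $\norm{\boldsymbol{\zeta}_K}_2$ on a single standard Gaussian vector, use their independence to rewrite $\bbE_{\tau_{r,K}}\bigl[F_{\chi_K}(c/\sqrt{\tau_{r,K}})\bigr]$ as $\bbP\bigl(\norm{\boldsymbol{\zeta}_K^{(r)}}_2\le c\bigr)$, and identify that probability as $F_{\chi_r}(c)$. Your handling of the null event $\tau_{r,K}=0$ and of the degenerate case $r=K$ is slightly more careful than the paper's, which leaves both implicit.
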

\begin{proof}
    Since $\sqrt{\tau_{r,K}}=\norm{\boldsymbol{\nu}_K^{(r)}}_2={\norm{\boldsymbol{\zeta}_K^{(r)}}_2}/{\norm{\boldsymbol{\zeta}_K}_2}$, and $\tau_{r,K}$ is independent of $\norm{\boldsymbol{\zeta}_K}_2$, we have
    \begin{equation*}
        \begin{aligned}
            F_{\chi_r}(c) &= \bbP\left( \norm{\boldsymbol{\zeta}_K^{(r)}}_2\leq c \right) 
            = \bbP\left( \norm{\boldsymbol{\zeta}_K}_2\leq \frac{c}{\sqrt{\tau_{r,K}}} \right) 
            = \bbE_{\tau_{r,K}}F_{\chi_K}\left( \frac{1}{\sqrt{\tau_{r,K}}}c \right),
        \end{aligned}
    \end{equation*}
    which proves the conclusion.
\end{proof}

Since $L_K\boldsymbol{\nu}_K=L_r\boldsymbol{\nu}_K^{(r)}$, we have
$    \rho_K(\boldsymbol{\nu}_K)=\rho_r(\boldsymbol{\nu}_K^{(r)})=\rho_r(\boldsymbol{\nu}_r)/{\sqrt{\tau_{r,K}}}$.
Using~\eqref{Eq:mean_tau}, we can write the variance of the $K$-dimensional SRD as
\begin{equation}\label{Eq:var_SRD_K}
    \begin{aligned}
        V_K &:= \mathrm{Var}_{\boldsymbol{\nu}_K} \left( F_{\chi_K}(\rho_K(\boldsymbol{\nu}_K)) \right)
        = \bbE_{\boldsymbol{\nu}_K} \left[\left( F_{\chi_K}(\rho_K(\boldsymbol{\nu}_K))-\varphi \right)^2\right] \\
        &= \bbE_{\boldsymbol{\nu}_r,\tau_{r,K}} \bigg[\Big( F_{\chi_K}\Big( \frac{1}{\sqrt{\tau_{r,K}}}\rho_r(\boldsymbol{\nu}_r) \Big)-\varphi \Big)^2\bigg] \\
        &= \bbE_{\boldsymbol{\nu}_r,\tau_{r,K}} \bigg[ \Big( F_{\chi_K}\Big( \frac{1}{\sqrt{\tau_{r,K}}}\rho_r(\boldsymbol{\nu}_r) \Big)-F_{\chi_r}(\rho_r(\boldsymbol{\nu}_r)) \Big)^2 + (F_{\chi_r}(\rho_r(\boldsymbol{\nu}_r))-\varphi)^2 \bigg].
    \end{aligned}
\end{equation}

Conversely, the variance of the $r$-dimensional SRD is given by
\begin{equation*}
    V_r := \mathrm{Var}_{\boldsymbol{\nu}_r} ( F_{\chi_r}(\rho_r(\boldsymbol{\nu}_r)) ) = \bbE_{\boldsymbol{\nu}_r} \big[ (F_{\chi_r}(\rho_r(\boldsymbol{\nu}_r))-\varphi)^2 \big],
\end{equation*}
which coincides with the second term in~\eqref{Eq:var_SRD_K}. Thus, while including redundant dimensions in SRD still yields the correct estimator, it introduces additional variance given by the first term in~\eqref{Eq:var_SRD_K}.
Next, we quantify this additional variance for large $K$.
First, the variance of the standard Monte Carlo estimation is
\begin{equation}\label{Eq:var_MC}
    \begin{aligned}
        V_{\mathrm{MC}} &:= \mathrm{Var}_{\boldsymbol{\nu}_r} \left( \cI_{\left\{ \tau_r\leq \rho_r(\boldsymbol{\nu}_r) \right\}} \right) = \bbE_{\boldsymbol{\nu}_r,\tau_r} \left[\left( \cI_{\left\{ \tau_r\leq \rho_r(\boldsymbol{\nu}_r) \right\}}-\varphi \right)^2\right] \\
        &= \bbE_{\boldsymbol{\nu}_r,\tau_r} \left[ \left( \cI_{\left\{ \tau_r\leq \rho_r(\boldsymbol{\nu}_r) \right\}}-F_{\chi_r}(\rho_r(\boldsymbol{\nu}_r)) \right)^2 + (F_{\chi_r}(\rho_r(\boldsymbol{\nu}_r))-\varphi)^2 \right] \\
        &= \bbE_{\boldsymbol{\nu}_r,\tau_r} \left[ F_{\chi_r}(\rho_r(\boldsymbol{\nu}_r))\left( 1-F_{\chi_r}(\rho_r(\boldsymbol{\nu}_r)) \right) + (F_{\chi_r}(\rho_r(\boldsymbol{\nu}_r))-\varphi)^2 \right].
    \end{aligned}
\end{equation}
Denote
\begin{equation}\label{Eq:Wk}
    \begin{aligned}
        W_K(c)&:=\bbE_{\tau_{r,K}} \bigg[ \Big( F_{\chi_K}\Big( \frac{1}{\sqrt{\tau_{r,K}}}c \Big)-F_{\chi_r}(c) \big)^2 \bigg], \\
        W_{\mathrm{MC}}(c)&:=F_{\chi_r}(c)\left( 1-F_{\chi_r}(c) \right),
    \end{aligned}
\end{equation}
then by~\eqref{Eq:var_SRD_K} and~\eqref{Eq:var_MC}, the difference between $V_K$ and $V_{\mathrm{MC}}$ is
\begin{equation}\label{Eq:V_diff}
    V_{\mathrm{MC}}-V_K=\bbE_{\boldsymbol{\nu}_r} \left[ W_{\mathrm{MC}}(\rho_r(\boldsymbol{\nu}_r))-W_K(\rho_r(\boldsymbol{\nu}_r)) \right],
\end{equation}
which means that $W_{\mathrm{MC}}(\rho_r(\boldsymbol{\nu}_r))-W_K(\rho_r(\boldsymbol{\nu}_r))$ is the variance difference in the direction $\boldsymbol{\nu}_r$.

The following theorem, whose proof is detailed in \cref{app:1}, shows that $V_{\mathrm{MC}}-V_K$ converges to $0$ as $K\to\infty$, with a rate of $\cO \big( K^{-\frac{1}{2}} \big)$.

\begin{theorem}\label{thm:estimation-degeneration}
    There exists a uniform constant $C>0$, such that for any $c>0$, $W_{\mathrm{MC}}(c)-W_K(c)\leq \frac{C}{\sqrt{K}}$. Consequently, $V_{\mathrm{MC}}-V_K\leq \frac{C}{\sqrt{K}}$. 
\end{theorem}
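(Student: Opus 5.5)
The plan is to rewrite both $W_{\mathrm{MC}}(c)$ and $W_K(c)$ as second moments and compare them through a single coupling. Set $X:=F_{\chi_K}\bigl(c/\sqrt{\tau_{r,K}}\bigr)$. By \cref{LEMMA:mean_K}, $\bbE X=F_{\chi_r}(c)=:F$, so $W_K(c)=\mathrm{Var}(X)=\bbE X^2-F^2$. On the other hand $W_{\mathrm{MC}}(c)=F-F^2$. Hence
\[
W_{\mathrm{MC}}(c)-W_K(c)=\bbE\bigl[X-X^2\bigr]=\bbE\bigl[X(1-X)\bigr].
\]
It therefore suffices to show $\bbE[X(1-X)]\le C/\sqrt{K}$ uniformly in $c$. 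The second claim, the bound on $V_{\mathrm{MC}}-V_K$, then follows immediately from \eqref{Eq:V_diff} by integrating over $\boldsymbol{\nu}_r$.

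Next, use the concentration of the $\chi_K$ distribution. Since $\chi_K^2$ has mean $K$ and variance $2K$, the CDF $F_{\chi_K}$ rises from near $0$ to near $1$ in a window of width $O(1)$ around $\sqrt{K}$. Write $s=c/\sqrt{\tau_{r,K}}$. Then $X(1-X)\le \min\{F_{\chi_K}(s),1-F_{\chi_K}(s)\}$, which is small unless $s$ lies in a window $|s-\sqrt K|\lesssim t$.

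The key step is a bound $\bbP\bigl(|s-\sqrt K|\le t\bigr)\lesssim t/\sqrt K$, uniform in $c$. Since $s=c/\sqrt{\tau}$, this event is $\tau\in\bigl[c^2/(\sqrt K+t)^2,\;c^2/(\sqrt K-t)^2\bigr]$. This is a relative window of size $\approx 4t/\sqrt K$ around $\tau_0=c^2/K$.

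One could bound the Beta$(r/2,(K-r)/2)$ density times the interval length, using that $\tau f_\tau(\tau)$ is uniformly bounded: $\tau\, \mathrm{Beta}(r/2,(K-r)/2)$-density $\lesssim$ const independent of $K$. This holds because $K\tau$ is approximately $\chi_r^2$ and $x f_{\chi^2_r}(x)$ is bounded. A cleaner route avoids the Beta density altogether. Write $s=\norm{\boldsymbol\zeta_K}_2\, c/\norm{\boldsymbol\zeta_K^{(r)}}_2$. Conditioning on $\norm{\boldsymbol\zeta_K^{(r)}}$ is awkward because the two norms are dependent, so use the independence of $\tau$ and $R=\norm{\boldsymbol\zeta_K}$ noted in \cref{LEMMA:mean_K} instead. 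This gives
\[
X=\bbP\bigl(R\le s\mid\tau\bigr),\qquad X(1-X)=\bbP\bigl(R\le s<R'\mid\tau\bigr),
\]
with $R'$ an independent copy of $R$. Hence
\[
\bbE[X(1-X)]=\bbP\bigl(R\le c/\sqrt\tau< R'\bigr)\le \bbP\bigl(c/\sqrt{\tau}\in[\min(R,R'),\max(R,R')]\bigr).
\]
Conditional on $R,R'$, this is the probability that $\tau$ lies in the interval $\bigl[c^2/\max^2,\,c^2/\min^2\bigr]$, whose log-length is $2\log(\max/\min)$. Since $|R-R'|=O_P(1)$ and $R\approx\sqrt K$, this log-length is $O(1/\sqrt K)$ in expectation. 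Combined with the uniform bound $\sup_\tau \tau f_{\tau_{r,K}}(\tau)\le C_r$, this yields $\bbE[X(1-X)]\le C_r\,\bbE\bigl[2\log(\max(R,R')/\min(R,R'))\bigr]\le C/\sqrt K$.

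Two estimates remain. The first is $\bbE|\log R-\log R'|=O(K^{-1/2})$. It follows from the fact that $\log R$ has standard deviation $O(K^{-1/2})$, by the delta method or directly from $\mathrm{Var}(\log\chi_K^2)\sim 2/K$.

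The second, which I expect to be the main obstacle, is the bound $\sup_{\tau}\tau f_{\tau_{r,K}}(\tau)\le C$ uniformly in $K$. Here $\tau f(\tau)\propto \tau^{r/2}(1-\tau)^{(K-r)/2-1}/B(r/2,(K-r)/2)$. Maximizing at $\tau\approx r/K$ gives a value $\approx (r/K)^{r/2}e^{-r/2}\,K^{r/2}/\Gamma(r/2)$, using $B\sim\Gamma(r/2)K^{-r/2}2^{r/2}$. This is bounded in $K$, with the constant depending only on $r$, which is fixed. The small-$K$ cases are absorbed into $C$, since $W_{\mathrm{MC}}-W_K\le 1/4$.
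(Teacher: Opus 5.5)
Your proof is correct. After the common first step it takes a genuinely different route from the paper.

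Both arguments begin with the identity $W_{\mathrm{MC}}(c)-W_K(c)=\bbE[X(1-X)]$, where $X=F_{\chi_K}(c/\sqrt{\tau_{r,K}})$; this is \eqref{Eq:W_diff} in the paper. From there the two proofs diverge.

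\textbf{The paper's route.} It applies Berry--Esseen to replace $F_{\chi_K^2}(c^2/\tau_{r,K})$ by $\Phi(s(\tau_{r,K}))$, at a cost of $\cO(K^{-1/2})$. It then splits the integral at $|s|=\sqrt{\log K}$. The outer part is handled by the Gaussian tail bound \eqref{Eq:asy_Phi}. The inner part is handled by a change of variables $t\mapsto s$ and a Stirling-based pointwise bound on the Beta density along $t(s)\approx c^2/K$.

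\textbf{Your route.} You write $\bbE[X(1-X)]=\bbP\bigl(R\le c/\sqrt{\tau_{r,K}}<R'\bigr)$ exactly, with $R,R'$ i.i.d.\ $\chi_K$ and independent of $\tau_{r,K}$. You then use the scale-invariant estimate $\int_a^b f_{\tau_{r,K}}\le \sup_t\bigl(tf_{\tau_{r,K}}(t)\bigr)\log(b/a)$. Since $\log(b/a)=2|\log R-\log R'|$ does not involve $c$, uniformity in $c$ comes for free. The rate then reduces to a variance bound: $\mathrm{Var}(\log R)=\tfrac14\mathrm{Var}(\log\chi_K^2)=\tfrac14\psi'(K/2)=\cO(1/K)$.

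Two small points would make it airtight.
\begin{enumerate}
\item The delta method alone does not control moments. Cite instead the exact trigamma identity together with $\psi'(x)\le 1/x+1/x^2$, which gives $\bbE|\log R-\log R'|\le\sqrt{1/K+2/K^2}$.
\item $\sup_t tf_{\tau_{r,K}}(t)=\infty$ when $K=r+1$, because of the factor $(1-t)^{-1/2}$. Your remark that small $K$ is absorbed via $W_{\mathrm{MC}}-W_K\le 1/4$ handles this correctly.
\end{enumerate}

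\textbf{What each approach buys.} Your route needs no CLT and no case split, and it yields an explicit constant, namely $2\sqrt{3}\,\sup_{K\ge r+2,\,t}tf_{\tau_{r,K}}(t)$. It also avoids the uniform-in-$c$ bookkeeping of $(1-t(s))^{(K-r)/2-1}$ when $c^2$ is comparable to $K$, which the paper's inner-region estimate needs. It makes the mechanism transparent as well: the gap is the probability that the random threshold $c/\sqrt{\tau_{r,K}}$ lands inside the $\cO(K^{-1/2})$ relative spread of $\chi_K$. The paper's normal approximation, by contrast, isolates the leading-order term explicitly. That makes it the natural starting point for a sharp constant or a matching lower bound confirming the observed $K^{-1/2}$ rate. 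Your exact coupling identity could serve that purpose too, if paired with a lower bound on the Beta density.
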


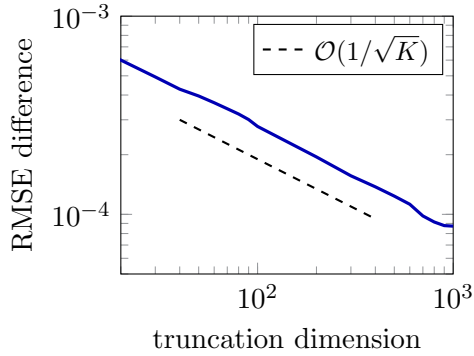
\begin{figure}[tbp]
    \centering
    \begin{tikzpicture}
        \begin{loglogaxis}[
            xlabel={truncation dimension},
            ylabel={RMSE difference},
            xmin=20, xmax=1000,
            ymin=5e-5, ymax=1e-3,
            width=6cm, height=5cm,
            legend pos=north east,
            legend cell align={left},
            legend style={font=\small},
            grid style=dotted,
        ]
        \path[name path=axis] (axis cs:1,1e-4) -- (axis cs:1000,1e-4);
        \addplot[color=black!30!blue, mark=none, very thick, forget plot]
            table[x=k, y expr=(\thisrow{MC}-\thisrow{hiSRD_MC})] {data/hiSRD.txt};
        \addplot[dashed,black,thick] coordinates {
            (40,3e-4)
            (400,3e-4/sqrt{10})
        };
        \addlegendentry{$\cO(1/\sqrt{K})$}
        \end{loglogaxis} 
    \end{tikzpicture}
    \caption{RMSE difference between standard MC and SRD ($y$-axis) versus approximation dimension ($x$-axis). The dashed line indicates the slope theoretically expected from \cref{thm:estimation-degeneration}. Results are for Gaussian process example detailed in \cref{SEC:NumGPR}, but with 1000 repeats.}\label{fig:var_diff}
\end{figure}

The theorem provides an upper bound on the variance differences, and the question arises if one observes the $1/\sqrt{K}$ rate in practice. We therefore use the same data as in \cref{fig:intro} and, in \cref{fig:var_diff}, plot the difference in the variance-dominated regime against $K$.
Note that the computed RMSE difference is an estimation of $(\sqrt{V_{\mathrm{MC}}}-\sqrt{V_K})/\sqrt{N}$, which has the same convergence order as $V_{\mathrm{MC}}-V_K$. We thus observe that \cref{thm:estimation-degeneration} accurately describes the variance degeneration as a function of $K$.

\section{Hybrid infinite-dimensional spherical-radial decomposition}\label{sec:hiSRD}
We next motivate and introduce the proposed \hiSRD\ method and analyze the components contributing to its variance. A central focus of the section is to establish the differentiability of probability function estimates obtained with \hiSRD, which requires extension of existing techniques. We also discuss the computational complexity of the method.

\subsection{Definition of the method}
Employing a truncated stochastic expansion, the SRD described in \cref{subsec:finite-dim-SRD} provides an efficient and differentiable method to approximate chance constraints. However, the choice of the truncation dimension $K$, requires careful consideration. If $K$ is too small, the resulting probability estimation can be heavily biased. If $K$ is too large, the variance reduction compared to standard MC becomes marginal, as demonstrated in \cref{subsec:variance-analysis}. To address this, we now introduce \hiSRD, which eliminates this bias and renders the accuracy largely independent of the choice of $K$. This is achieved by adding a standard Monte Carlo correction term that accounts for the truncation remainder.

\begin{figure}[bt]
    \centering
    \includegraphics[width=0.38\linewidth]{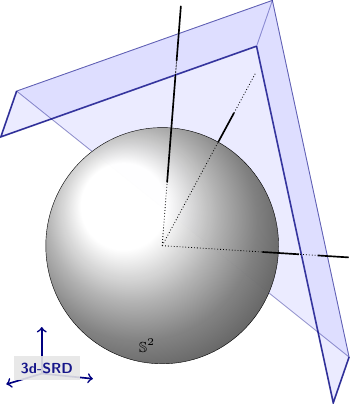}\hspace{.06\textwidth}
    \includegraphics[width=0.38\linewidth]{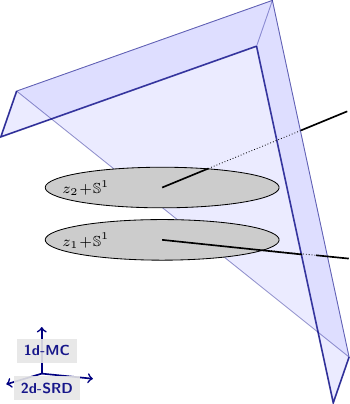}
   \caption{Illustration of standard SRD (left) and hybrid SRD (right) to estimate the probability for the three-dimensional set given by the intersection of half spaces defined by hyperplanes (light blue). The standard SRD integrates the three-dimensional $\chi$-density along 3D rays (black lines in left figure). The hybrid method uses Monte Carlo in the $z$-direction and SRD in the $x,y$-direction, integrating the two-dimensional $\chi$-density along rays (black lines in right figure). }
    \label{fig:hiSRD-illustration}
\end{figure}

In infinite-dimensional spaces, the standard geometric definition of elliptical distributions breaks down since there is no uniform probability measure on the unit sphere. Instead, a random variable $\xi$ taking values in a separable Hilbert space $\mathscr{H}$ is said to follow an elliptical distribution if it can be represented as a scale mixture of a Gaussian measure \cite{BoBaTy-JMA14}. Specifically, this means that $\xi$ can be written as $\xi = \bar{\xi} + \sigma\zeta$,
where $\bar{\xi} \in \mathscr{H}$ denotes the mean of $\xi$, $\zeta$ is a centered Gaussian random variable in $\mathscr{H}$ with a trace-class covariance operator, and $\sigma$ is a non-negative scalar random variable independent of $\zeta$.

Since every infinite-dimensional elliptical random variable is inherently built upon a Gaussian base $\zeta$, applying SRD to a general elliptical distribution requires performing the decomposition on the underlying Gaussian measure and composing the radial distribution with $\sigma$. Consequently, for simplicity of the presentation, we focus on the case where $\xi$ is a Gaussian random field, i.e., $\sigma$ is a constant. The generalization to other elliptical distributions, such as the infinite-dimensional Student's $t$-distribution, only requires modifying the radial component. Specifically, the radial distribution $\chi_K$ below must be replaced by the distribution of the product $\sigma \cdot \chi_K$.

Suppose $\xi$ is an infinite-dimensional Gaussian random variable with a KL expansion of the form~\eqref{eq:inf-xi}. We can truncate the expansion and decompose the centered random variable $\xi-\bar{\xi}$ into $\xi-\bar{\xi}=\xi_K+\xi_R$, where
\begin{equation*}
    \xi_K:=\sum_{k=1}^K \xi_k\phi_k,\quad \xi_R:=\sum_{k=K+1}^\infty \xi_k\phi_k.
\end{equation*}
We decompose the space correspondingly by $\mathscr{H}=\mathscr{H}_K\oplus \mathscr{H}_R$, where $\mathscr{H}_K:=\operatorname{span}{\{\phi_k:k\leq K\}}$, and $\mathscr{H}_R:=\operatorname{span}{\{\phi_k:k\geq K+1\}}$. We perform SRD on $\xi_K$, and keep the integration of the remainder $\xi_R$:
\begin{equation}\label{Eq:iint_mu}
 \varphi(u)=\int_{\mathscr{H}_R}\int_{\boldsymbol{v}\in\bbS^{K-1}}\mu_{\chi_K}\left(\left\{r\geq 0: g(u,\bar{\xi}+z+r L_K\boldsymbol{v})\leq 0\right\}\right)d\mu_{\cU} (\boldsymbol{v})d\mu_{\xi_R}(z).
\end{equation}
Here, $z$ represents a realization of the remainder $\xi_R$, and $L_K:\bbR^K\to\mathscr{H}_K$ is defined by $L_K\boldsymbol{x}=\sum_{k=1}^K x_k \phi_k$ for $\boldsymbol{x}\in\bbR^K$. The difference between the SRD in finite dimensions and the definition in \eqref{Eq:iint_mu} is depicted schematically in \cref{fig:hiSRD-illustration}. Note that even if $g(u,\bar\xi)<0$, we generally cannot conclude that $g(u,\bar{\xi}+z)<0$. Compared to the SRD in finite dimensions, we therefore need to modify integration along rays $\boldsymbol v$ and define
\begin{equation*}
    \begin{gathered}
        \rho_{\mathrm{in}}(u,\boldsymbol{v},z):=\inf{\{r\geq 0:g(u,\bar{\xi}+z+r L_K\boldsymbol{v})\leq 0\}}, \\
        \rho_{\mathrm{out}}(u,\boldsymbol{v},z):=\sup{\{r\geq 0:g(u,\bar{\xi}+z+r L_K\boldsymbol{v})\leq 0\}}.
    \end{gathered}
\end{equation*}
Here, when considering extrema over subsets of $[0,+\infty)$, we use the conventions $\inf{\varnothing}=+\infty$ and $\sup{\varnothing}=0$. Assuming that $g(u,\cdot)$ is convex for every $u$, the one-dimensional set $\{r\geq 0: g(u,\bar{\xi}+z+r L_K\boldsymbol{v})\leq 0\}$ is the interval $[\rho_{\mathrm{in}}(u,\boldsymbol{v},z),\rho_{\mathrm{out}}(u,\boldsymbol{v},z)]$, and thus
\begin{equation*}
    \mu_{\chi_K}\big(\{r\geq 0: g(u,\bar{\xi}+z+r L_K\boldsymbol{v})\leq 0\}\big) = \max{\{0,F_{\!\chi_K}(\rho_{\mathrm{out}}(u,\boldsymbol{v},z))\!-\!F_{\!\chi_K}(\rho_{\mathrm{in}}(u,\boldsymbol{v},z))\}}.
\end{equation*}
Consequently, we can rewrite the probabilistic function~\eqref{Eq:iint_mu} as
\begin{equation*}
    \varphi(u)=\int_{\mathscr{H}}\int_{\boldsymbol{v}\in\bbS^{K-1}}\max{\{0, F_{\chi_K}(\rho_{\mathrm{out}}(u,\boldsymbol{v},z))-F_{\chi_K}(\rho_{\mathrm{in}}(u,\boldsymbol{v},z))\}}d\mu_{\cU} (\boldsymbol{v})d\mu_{\xi_R}(z).
\end{equation*}
By independently sampling $\{\boldsymbol{v}_i\}_{i=1}^N$ from $\cU(\bbS^{K-1})$ and $\{z_i\}_{i=1}^N$ from $\xi_R$, we obtain an unbiased approximation $\tilde{\varphi}_N$ of $\varphi$:
\begin{equation}\label{Eq:phi_u_dis}
    \varphi(u)\approx \tilde{\varphi}_N(u):=\frac{1}{N}\sum_{i=1}^N \left[ \max{\{0, F_{\chi_K}(\rho_{\mathrm{out}}(u,\boldsymbol{v}_i,z_i))-F_{\chi_K}(\rho_{\mathrm{in}}(u,\boldsymbol{v}_i,z_i))\}} \right].
\end{equation}

\subsection{Variance analysis}\label{subsec:variance-analysis}

The variance of the \hiSRD\ estimator can be decomposed into two components: the expected variance of the SRD estimator conditioned on the remainder, and the variance introduced by the remainder $\xi_R$ itself. To formalize this, for any fixed remainder realization $z\in\mathscr{H}_R$ and direction $\boldsymbol{v}\in\bbS^{K-1}$, we define the ray probability as
\begin{equation*}
    \tilde{\varphi}(u;\boldsymbol{v},z) := \mu_{\chi_K}\left(\left\{r\geq 0: g(u,\bar{\xi}+z+r L_K\boldsymbol{v})\leq 0\right\}\right).
\end{equation*}
Integrating this over the sphere yields the exact conditional probability $\bbP(g(u,\xi)\leq 0 \mid \xi_R=z)$, which we denote by
\begin{equation*}
    \tilde{\varphi}_S(u;z) := \int_{\boldsymbol{v}\in\bbS^{K-1}}\tilde{\varphi}(u;\boldsymbol{v},z) d\mu_{\cU} (\boldsymbol{v}).
\end{equation*}
Consequently, the following expectation relations hold:
\begin{equation*}
    \bbE_{\boldsymbol{\nu}}[\tilde{\varphi}(u;\boldsymbol{\nu},\xi_R) \mid \xi_R] = \tilde{\varphi}_S(u;\xi_R),\quad \bbE_{\xi_R}[\tilde{\varphi}_S(u;\xi_R)] = \varphi(u).
\end{equation*}
Using the law of total variance, the variance of the estimator $\tilde{\varphi}(u;\boldsymbol{\nu},\xi_R)$ can be split as
\begin{equation*}
    \begin{aligned}
        V_{\text{total}}&:= \mathrm{Var}_{\boldsymbol{\nu},\xi_R}(\tilde{\varphi}(u;\boldsymbol{\nu},\xi_R)) =  V_{\text{SRD}} + V_{\text{rem}},
    \end{aligned}
\end{equation*}
    where
\begin{equation}\label{eq:VSRD}
        V_{\text{SRD}} := \bbE_{\xi_R}[\mathrm{Var}_{\boldsymbol{\nu}}(\tilde{\varphi}(u;\boldsymbol{\nu},\xi_R) \mid \xi_R)], \quad V_{\text{rem}}:= \mathrm{Var}_{\xi_R}(\tilde{\varphi}_S(u;\xi_R)).
\end{equation}
Here, 
$V_{\text{SRD}}$
represents the expected conditional variance of the SRD estimator, where the expectation is taken over the remainder $\xi_R$, and $V_{\text{rem}}$
captures the variance of $\tilde{\varphi}_S(u;\xi_R)$ with respect to the remainder, and is independent of $\boldsymbol{\nu}$.

With moderately large truncation dimension $K$, one can expect the variance of the remainder $\xi_R$ to be small, so we anticipate that $V_{\text{rem}}$ is well bounded by the variance of the remainder itself and is relatively small. Since SRD yields substantial variance reduction in practice, $V_{\text{SRD}}$ generally remains smaller than the variance of standard Monte Carlo. The behavior of these two variance components is further examined numerically in~\cref{subsec:numerical-variance}.

\subsection{Differentiability of probability functions}
\begin{figure}[tb]
    \centering
    \includegraphics[width=0.42\linewidth]{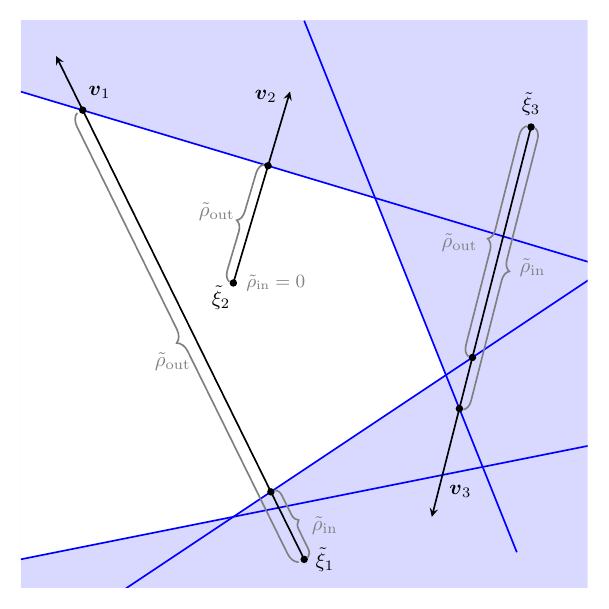}
    \caption{Geometric illustration of SRD rays $\tilde{\xi}_i+r\boldsymbol{v}_i$ ($i=1,2,3$), which start at $\tilde{\xi}_i$. The affine constraints defining the feasible set (white area) are shown as half spaces (in blue). The rays demonstrate three different intersection scenarios: a ray entering and exiting the feasible set from the outside, with $0<\tilde{\rho}_{\mathrm{in}}<\tilde{\rho}_{\mathrm{out}}$ ($i=1$), a ray originating inside the feasible set, with $0=\tilde{\rho}_{\mathrm{in}}<\tilde{\rho}_{\mathrm{out}}$ ($i=2$), and a ray that completely misses the feasible set, illustrating the case in which $\tilde{\rho}_{\mathrm{out}} < \tilde{\rho}_{\mathrm{in}}$ ($i=3$).}
    \label{Fig:SRDrays}
\end{figure}

The differentiability of the finite-dimen\-sional SRD estimator has been studied in different settings. Prior works have analyzed either the exact probabilistic function $\varphi(\cdot)$ defined in \eqref{eq:phi(u)}, based on analytical integration on the unit sphere \cite{AcPe-AMO22,AcHe-SIAM17,AcHe-SIAM14}, or the discrete approximation $\varphi_N(\cdot)$ defined in \eqref{eq:phiN(u)}, which is based on Monte Carlo sampling from the uniform distribution on the unit sphere \cite{AcHeZi-AMO24,HeStWe-SIAM25}. Here we follow the latter approach. We build on the techniques leading to \cite[Cor.\ 3.2]{AcHeZi-AMO24}, where it is shown that if $g$ is a continuously differentiable function that is convex in the second variable and satisfies certain growth conditions, and $g(u,\bar{\xi})<0$, then $F_{\chi}(\rho(u,\boldsymbol{v}))$ is differentiable. These results must be generalized to show the differentiability of \hiSRD, as the shifted means $\bar{\xi}+z$ may not satisfy $g(u,\bar{\xi}+z)<0$ for every sample $z$.

Assume that the random constraint function $g$ has the specific form
\begin{equation*}
    g:=\max_{j=1,\dots,M}g_j,
\end{equation*}
where for any $u\in U$, $g_j(u,\cdot)$ is affine. Subsequently, we fix some $u\in U_{\text{ad}}$, and assume that
\begin{equation}\label{Eq:neg_gj}
    g_j(u,\bar{\xi})<0,\quad j=1,\dots,M.
\end{equation}

To compute derivatives of $\tilde{\varphi}_N(u)$ defined in \eqref{Eq:phi_u_dis}, we first derive an equivalent form of $\tilde{\varphi}_N$. For that purpose, we partition the indices into two groups, according to whether $g_j(u,\bar{\xi}+z)$ satisfies the constraint or not:
\begin{gather*}
    J_{\mathrm{i}}:=\{j\in\{1,\dots,M\}:g_j(u,\bar{\xi}+z)< 0\}, \\
    J_{\mathrm{o}}:=\{j\in\{1,\dots,M\}:g_j(u,\bar{\xi}+z)> 0\}.
\end{gather*}
We then define
\begin{gather*}
    \begin{aligned}
        \rho_{j,\mathrm{in}}(u,\boldsymbol{v},z) &:= \inf{\{r\geq 0:g_j(u,\bar{\xi}+z+r L_K\boldsymbol{v})\leq 0\}} \\
        &= \sup{\{r\geq 0:-g_j(u,\bar{\xi}+z+r L_K\boldsymbol{v})\leq 0\}},
    \end{aligned} \\
    \rho_{j,\mathrm{out}}(u,\boldsymbol{v},z):=\sup{\{r\geq 0:g_j(u,\bar{\xi}+z+r L_K\boldsymbol{v})\leq 0\}},
\end{gather*}
and
\begin{equation}\label{Eq:rhoModified}
    \tilde{\rho}_{\mathrm{in}}(u,\boldsymbol{v},z):=\max_{j\in J_{\mathrm{o}}}{\rho_{j,\mathrm{in}}(u,\boldsymbol{v},z)},\quad \tilde{\rho}_{\mathrm{out}}(u,\boldsymbol{v},z):=\min_{j\in J_{\mathrm{i}}}{\rho_{j,\mathrm{out}}(u,\boldsymbol{v},z)}.
\end{equation}
The geometric meanings of $\tilde{\rho}_{\mathrm{in}}$ and $\tilde{\rho}_{\mathrm{out}}$ are illustrated in~\cref{Fig:SRDrays}.

The modified distance definitions \eqref{Eq:rhoModified} are not only defined for theoretical derivation but are also used in numerical calculations. The typical finite-dimensional setting of SRD summarized in \cref{subsec:finite-dim-SRD} only considers centers $\bar{\xi}$ where $g(u,\bar{\xi})<0$. In our generalized setting, the shifted means $\bar{\xi}+z$ may fall outside the feasible set for some realizations of the remainder $z$. When this occurs, evaluating $\rho_{\mathrm{in}}$, $\rho_{\mathrm{out}}$ in~\eqref{Eq:phi_u_dis} is problematic, since $\rho_{\mathrm{out}}=\min_{j\in J}{\rho_{j,\mathrm{out}}(u,\boldsymbol{v},z)}$ may not hold. However, we can still use the distances defined in~\eqref{Eq:rhoModified} to construct an alternative, computationally efficient form of $\tilde{\varphi}_N(u)$, as shown in the following lemma.

\begin{lemma}
    In the setting and with the above definitions, it holds that:
    \begin{equation}\label{Eq:phi_u_dis2}
        \tilde{\varphi}_N(u)=\frac{1}{N}\sum_{i=1}^N \left[ \max{\{0, F_{\chi_K}(\tilde{\rho}_{\mathrm{out}}(u,\boldsymbol{v}_i,z_i))-F_{\chi_K}(\tilde{\rho}_{\mathrm{in}}(u,\boldsymbol{v}_i,z_i))\}} \right].
    \end{equation}
\end{lemma}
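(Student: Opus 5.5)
The identity will be checked summand by summand. Fix a sample $(\boldsymbol v,z)=(\boldsymbol v_i,z_i)$ and write $h_j(r):=g_j(u,\bar\xi+z+rL_K\boldsymbol v)$. Each $h_j$ is an affine function of $r\ge0$, because $g_j(u,\cdot)$ is affine. The feasible set along the ray is
\[
I:=\{r\ge0: h_j(r)\le 0,\ j=1,\dots,M\}=\bigcap_{j=1}^M I_j, \qquad I_j:=\{r\ge 0: h_j(r)\le 0\}.
\]
The goal is to show that $\max\{0,F_{\chi_K}(\rho_{\mathrm{out}})-F_{\chi_K}(\rho_{\mathrm{in}})\}$, which equals $\mu_{\chi_K}(I)$, coincides with $\max\{0,F_{\chi_K}(\tilde\rho_{\mathrm{out}})-F_{\chi_K}(\tilde\rho_{\mathrm{in}})\}$. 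Summing over $i$ then gives \eqref{Eq:phi_u_dis2}.

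The first step is to describe each $I_j$ using the sign of $h_j(0)=g_j(u,\bar\xi+z)$.
\begin{itemize}
\item If $j\in J_{\mathrm i}$, then $h_j(0)<0$. Affinity gives $I_j=[0,\rho_{j,\mathrm{out}}]$, with $\rho_{j,\mathrm{out}}\in(0,\infty]$.
\item If $j\in J_{\mathrm o}$, then $h_j(0)>0$. Hence $I_j=[\rho_{j,\mathrm{in}},\infty)$, with $\rho_{j,\mathrm{in}}\in(0,\infty]$. Under the convention $\inf\varnothing=+\infty$, the set $I_j$ is empty exactly when $\rho_{j,\mathrm{in}}=\infty$.
\end{itemize}
Indices with $h_j(0)=0$ belong to neither group. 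I would argue that they occur only on a null set of samples $z$. This follows because $\xi_R$ is a nondegenerate Gaussian on the relevant directions, so $\bbP(g_j(u,\bar\xi+\xi_R)=0)=0$; the degenerate case where $g_j$ does not depend on $\xi_R$ is covered by \eqref{Eq:neg_gj}. Such samples can therefore be discarded, or assumed away as the lemma's "setting" implicitly does.

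Intersecting the sets $I_j$ gives $I=[\tilde\rho_{\mathrm{in}},\tilde\rho_{\mathrm{out}}]$ when $\tilde\rho_{\mathrm{in}}\le\tilde\rho_{\mathrm{out}}$, and $I=\varnothing$ otherwise. This uses the conventions that the max over an empty $J_{\mathrm o}$ is $0$ and the min over an empty $J_{\mathrm i}$ is $+\infty$. These are the natural conventions and I would state them explicitly.

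Two cases then finish the argument.
\begin{itemize}
\item If $\tilde\rho_{\mathrm{in}}\le\tilde\rho_{\mathrm{out}}$ and the interval is nonempty, then $\rho_{\mathrm{in}}=\tilde\rho_{\mathrm{in}}$ and $\rho_{\mathrm{out}}=\tilde\rho_{\mathrm{out}}$, so the two expressions agree term by term.
\item If $I=\varnothing$, then $\rho_{\mathrm{in}}=+\infty$ and $\rho_{\mathrm{out}}=0$, so the original term is $\max\{0,F(0)-1\}=0$. Meanwhile $\tilde\rho_{\mathrm{out}}<\tilde\rho_{\mathrm{in}}$, and monotonicity of $F_{\chi_K}$ makes the modified term $0$ as well.
\end{itemize}
Both expressions equal $\mu_{\chi_K}(I)$ in every case.

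The main obstacle is the bookkeeping of edge cases rather than any deep argument. These include infinite values of $\rho_{j,\cdot}$, empty index sets, and the boundary indices with $g_j=0$. I would handle the boundary indices first by the null-set argument above. Failing that, I would assign them to $J_{\mathrm i}$ and $J_{\mathrm o}$ according to the sign of the slope $h_j'(0)$, which preserves the same interval description.
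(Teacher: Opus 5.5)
Your proposal is correct and follows essentially the same route as the paper. Both arguments describe each $I_j$ as $[0,\rho_{j,\mathrm{out}}]$ for $j\in J_{\mathrm i}$ or $[\rho_{j,\mathrm{in}},\infty)$ for $j\in J_{\mathrm o}$, intersect these to obtain $[\tilde\rho_{\mathrm{in}},\tilde\rho_{\mathrm{out}}]$, and let $\max\{0,\cdot\}$ absorb the empty case. You go further than the paper in three places: indices with $g_j(u,\bar\xi+z)=0$ (which the paper's proof ignores and only excludes later via Assumption~\ref{Ass:sample}(1) and its almost-sure argument), the empty-index-set conventions, and the case $\rho_{j,\mathrm{in}}=\infty$. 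This correctly shows the identity holds for almost every sample; the one remaining loose end, $\tilde\rho_{\mathrm{in}}=\tilde\rho_{\mathrm{out}}=\infty$, trivially gives $\max\{0,1-1\}=0$.
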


\begin{proof}
    For each constraint $j \in \{1, \dots, M\}$, we have
    \begin{equation*}
        I_j := \{r \ge 0 : g_j(u,\bar{\xi}+z+r L_K\boldsymbol{v}) \le 0\} = [\rho_{j,\mathrm{in}}, \rho_{j,\mathrm{out}}].
    \end{equation*}
    Specifically, for $j \in J_{\mathrm{i}}$, the center $\bar{\xi}+z$ is inside the feasible half-space. Therefore, $\rho_{j,\mathrm{in}} = 0$ and $I_j = [0, \rho_{j,\mathrm{out}}]$. For $j \in J_{\mathrm{o}}$, the center is outside, and the ray never leaves the feasible half-space after intersecting it. Therefore, $\rho_{j,\mathrm{out}} = \infty$ and $I_j = [\rho_{j,\mathrm{in}}, \infty)$.
    
    The overall feasible set along the ray is the intersection of these intervals, given by the (possibly empty) interval:
    \begin{equation*}
        \begin{aligned}
            \left\{r\geq 0: g(u,\bar{\xi}+z+r L_K\boldsymbol{v})\leq 0\right\} &= \bigcap_{j=1}^M I_j = 
            [\tilde{\rho}_{\mathrm{in}}(u,\boldsymbol{v},z), \tilde{\rho}_{\mathrm{out}}(u,\boldsymbol{v},z)].
        \end{aligned}
    \end{equation*}
  If this intersection is empty, i.e., $\tilde{\rho}_{\mathrm{in}} > \tilde{\rho}_{\mathrm{out}}$, then the operator $\max\{0,\cdot\}$ in~\eqref{Eq:phi_u_dis2} correctly sets the negative CDF difference to $0$. Consequently, applying the same reasoning used to obtain~\eqref{Eq:phi_u_dis}, $\tilde{\varphi}_N(u)$ can be equivalently expressed as in~\eqref{Eq:phi_u_dis2}.
\end{proof}

We now show the differentiability of $\tilde{\varphi}_N(u)$ and derive explicit gradient expressions based on the form~\eqref{Eq:phi_u_dis2}. Specifically, we establish that $\tilde{\varphi}_N(u)$ is differentiable for a specific sample pair $\{\boldsymbol{v}_i\}_{i=1}^N$, $\{z_i\}_{i=1}^N$ under the following assumptions.

\begin{assumption}\label{Ass:sample}
    For fixed $u\in U$ and sample pair $\{\boldsymbol{v}_i\}_{i=1}^N$, $\{z_i\}_{i=1}^N$, assume that for all $i\in\{1,\dots,N\}$:
    \begin{enumerate}[leftmargin=*]
        \item \label{Ass:index} $J_{\mathrm{i}}\cup J_{\mathrm{o}}=\{1,\dots,M\}$;
        \item \label{Ass:inout} $\tilde{\rho}_{\mathrm{in}}(u,\boldsymbol{v}_i,z_i) \neq \tilde{\rho}_{\mathrm{out}}(u,\boldsymbol{v}_i,z_i)$ if $\tilde{\rho}_{\mathrm{in}}(u,\boldsymbol{v}_i,z_i) < \infty$ or $\tilde{\rho}_{\mathrm{out}}(u,\boldsymbol{v}_i,z_i) < \infty$;
        \item \label{Ass:unique} It holds that
        \begin{align*}
            &\#\left\{ j\in J_{\mathrm{o}} : \tilde{\rho}_{\mathrm{in}}(u,\boldsymbol{v}_i,z_i)=\rho_{j,\mathrm{in}}(u,\boldsymbol{v}_i,z_i) \right\}=1 \quad &&\text{if $0<\tilde{\rho}_{\mathrm{in}}(u,\boldsymbol{v}_i,z_i)<\infty$,}\\
            &\#\left\{ j\in J_{\mathrm{i}} : \tilde{\rho}_{\mathrm{out}}(u,\boldsymbol{v}_i,z_i)=\rho_{j,\mathrm{out}}(u,\boldsymbol{v}_i,z_i) \right\}=1 \quad &&\text{if $\tilde{\rho}_{\mathrm{out}}(u,\boldsymbol{v}_i,z_i)<\infty$}.
        \end{align*}
    \end{enumerate}
\end{assumption}

Under~\cref{Ass:sample}, we use $\tilde{\rho}_{\mathrm{in}}^{(i)}$ and $\tilde{\rho}_{\mathrm{out}}^{(i)}$ to denote the corresponding unique indices in~\eqref{Eq:rhoModified} for the $i$-th sample, i.e.,
\begin{equation*}
    \tilde{\rho}_{\mathrm{in}}^{(i)}:=\tilde{\rho}_{\mathrm{in}}(u,\boldsymbol{v}_i,z_i),\quad \tilde{\rho}_{\mathrm{out}}^{(i)}:=\tilde{\rho}_{\mathrm{out}}(u,\boldsymbol{v}_i,z_i),
\end{equation*}
and use $j_{1,i}$, $j_{2,i}$ to denote the corresponding unique indices 
for $\tilde{\rho}_{\mathrm{in}}^{(i)}$ and $\tilde{\rho}_{\mathrm{out}}^{(i)}$. We denote $\tilde{\xi}_i := \bar{\xi} + z_i$. We use $\nabla_z g_j$ to denote the gradient of $g_j$ with respect to its second argument. We also use the convention that $f_{\chi_K}(\infty) = 0$.

The next proposition generalizes existing differentiability results for finite-dimensional SRD, such as \cite[Prop.~3.4]{HeStWe-SIAM25}. Since we allow $g(u,\tilde{\xi}_i) \geq 0$, differentiating $\tilde{\varphi}_N(u)$ requires considering contributions from both the entry distance $\tilde{\rho}_{\mathrm{in}}^{(i)}$ and the exit distance $\tilde{\rho}_{\mathrm{out}}^{(i)}$.
Degenerate geometric cases 
which have origins lying exactly on the boundary or rays intersecting the feasible set at a single point, are ruled out by assumptions \ref{Ass:index} and \ref{Ass:inout}.

\begin{proposition}\label{Prop:differentiability}
    Under~\cref{Ass:sample}, $\tilde{\varphi}_N(u)$ is differentiable at $u$, and the derivative is given by
    \begin{equation}\label{Eq:GradDis}
        \begin{aligned}
            \nabla \tilde{\varphi}_N(u) &= \frac{1}{N}\sum_{i=1}^N \Bigg( -\frac{f_{\chi_K}(\tilde{\rho}_{\mathrm{out}}^{(i)})}{\braket{\nabla_{z}g_{j_{2,i}}(u,\tilde{\xi}_i+\tilde{\rho}_{\mathrm{out}}^{(i)} L_K\boldsymbol{v}_i), L_K\boldsymbol{v}_i}}\nabla_u g_{j_{2,i}}(u,\tilde{\xi}_i+\tilde{\rho}_{\mathrm{out}}^{(i)} L_K\boldsymbol{v}_i) \\
            &\quad  + \cI_{\{\tilde{\rho}_{\mathrm{in}}^{(i)} > 0\}} \frac{f_{\chi_K}(\tilde{\rho}_{\mathrm{in}}^{(i)})}{\braket{\nabla_{z}g_{j_{1,i}}(u,\tilde{\xi}_i+\tilde{\rho}_{\mathrm{in}}^{(i)} L_K\boldsymbol{v}_i), L_K\boldsymbol{v}_i}}\nabla_u g_{j_{1,i}}(u,\tilde{\xi}_i+\tilde{\rho}_{\mathrm{in}}^{(i)} L_K\boldsymbol{v}_i) \Bigg) H_i,
        \end{aligned}
    \end{equation}
    where $H_i = 1$ if $\tilde{\rho}_{\mathrm{out}}^{(i)} > \tilde{\rho}_{\mathrm{in}}^{(i)}$ and $0$ otherwise, and $\cI_{\{\tilde{\rho}_{\mathrm{in}}^{(i)} > 0\}}$ is the indicator function taking value $1$ when $\tilde{\rho}_{\mathrm{in}}^{(i)} > 0$ and $0$ otherwise.
\end{proposition}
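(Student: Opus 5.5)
Since $\tilde{\varphi}_N(u)$ in \eqref{Eq:phi_u_dis2} is a finite average, it suffices to show that each summand $h_i(u):=\max\{0,F_{\chi_K}(\tilde{\rho}_{\mathrm{out}}(u,\boldsymbol{v}_i,z_i))-F_{\chi_K}(\tilde{\rho}_{\mathrm{in}}(u,\boldsymbol{v}_i,z_i))\}$ is differentiable at $u$ with the stated gradient. I would fix $i$ and argue locally in a neighborhood of $u$, assuming (as is implicit in the setting) that $g_j$ is continuously differentiable in $u$ and affine in its second argument.

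The first step is to show that the combinatorial structure is locally constant. By \cref{Ass:sample}\ref{Ass:index}, every $g_j(u,\tilde{\xi}_i)$ is strictly nonzero. By continuity in $u$, the sets $J_{\mathrm{i}}$ and $J_{\mathrm{o}}$ are then unchanged for $u'$ near $u$. For each $j$, affinity in the second argument gives $g_j(u',\tilde{\xi}_i+rL_K\boldsymbol{v}_i)=g_j(u',\tilde{\xi}_i)+r\,a_j(u')$, with $a_j(u')=\braket{\nabla_z g_j(u',\cdot),L_K\boldsymbol{v}_i}$ independent of the point. Hence $\rho_{j,\mathrm{out}}=-g_j/a_j$ if $j\in J_{\mathrm{i}}$ and $a_j>0$, and $\rho_{j,\mathrm{out}}=\infty$ otherwise. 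Similarly, $\rho_{j,\mathrm{in}}=-g_j/a_j$ if $j\in J_{\mathrm{o}}$ and $a_j<0$, and $\rho_{j,\mathrm{in}}=\infty$ otherwise.

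The hard part is the case $a_j(u)=0$, where the sign of $a_j$ could flip under perturbation. I would handle it as follows. For finite active distances we must have $a_j(u)\neq0$, so those stay locally smooth. A distance equal to $\infty$ is either inactive in the min/max, or it is the value $\tilde\rho=\infty$ itself, which contributes $f_{\chi_K}(\infty)=0$. In the latter case I need to show that $F_{\chi_K}$ of a quantity that jumps from $+\infty$ to a very large finite value is still differentiable with zero derivative. This follows from the fact that $-g_j/a_j\to\infty$ as $a_j\to0$, so $F_{\chi_K}(-g_j/a_j)\to 1$ (or $\to 0$ through the $\max$), together with the superexponential decay of $1-F_{\chi_K}$. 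I would make this rigorous by bounding $1-F_{\chi_K}(c/|a_j(u')|)\le C e^{-c^2/(4|a_j(u')|^2)}=o(\|u'-u\|)$, since $|a_j(u')|=O(\|u'-u\|)$ when $a_j(u)=0$.

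With the finite-case structure fixed, the uniqueness in \cref{Ass:sample}\ref{Ass:unique} means that the min defining $\tilde{\rho}_{\mathrm{out}}$ and the max defining $\tilde{\rho}_{\mathrm{in}}$ are each attained strictly by the single indices $j_{2,i}$ and $j_{1,i}$. Locally, therefore, $\tilde{\rho}_{\mathrm{out}}(u')=\rho_{j_{2,i},\mathrm{out}}(u')$, and likewise for $\tilde{\rho}_{\mathrm{in}}$ when $0<\tilde{\rho}_{\mathrm{in}}<\infty$. When $\tilde{\rho}_{\mathrm{in}}=0$ we have $J_{\mathrm{o}}=\varnothing$, so it stays $0$; this is what produces the indicator $\cI_{\{\tilde{\rho}_{\mathrm{in}}^{(i)}>0\}}$. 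The implicit function theorem applied to $g_j(u',\tilde{\xi}_i+\rho L_K\boldsymbol{v}_i)=0$ then gives
\[
\nabla_u\rho_j=-\frac{\nabla_u g_j(u,\tilde{\xi}_i+\rho L_K\boldsymbol{v}_i)}{\braket{\nabla_z g_j(u,\tilde{\xi}_i+\rho L_K\boldsymbol{v}_i),L_K\boldsymbol{v}_i}} .
\]

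Finally, \cref{Ass:sample}\ref{Ass:inout} gives $\tilde{\rho}_{\mathrm{out}}\neq\tilde{\rho}_{\mathrm{in}}$ whenever one of them is finite. By continuity, the sign of $F_{\chi_K}(\tilde{\rho}_{\mathrm{out}})-F_{\chi_K}(\tilde{\rho}_{\mathrm{in}})$ is locally constant, so the $\max\{0,\cdot\}$ is either locally zero ($H_i=0$) or locally the identity ($H_i=1$). If both distances are infinite, the summand is locally constant up to the $o(\|u'-u\|)$ term above. The chain rule with $F_{\chi_K}'=f_{\chi_K}$ then yields \eqref{Eq:GradDis}. The out-term carries a minus sign from the implicit derivative, and the in-term carries a plus sign because it enters with $-F_{\chi_K}$.
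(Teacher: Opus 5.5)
Your proposal is correct and follows essentially the same route as the paper's proof. Both arguments use the same steps: $J_{\mathrm{i}},J_{\mathrm{o}}$ are locally constant by assumption~\ref{Ass:index}; the $\max\{0,\cdot\}$ is removed via assumption~\ref{Ass:inout}; the min/max is localized to the unique active index via assumption~\ref{Ass:unique}; $J_{\mathrm{o}}=\varnothing$ produces the indicator; and the derivative vanishes whenever a distance is infinite. The only difference is that you derive directly from affinity what the paper imports from \cite[Cor.~3.2, Lem.~3.3]{AcHeZi-AMO24}, namely the explicit $\rho_j=-g_j/a_j$, the implicit function theorem, and the Gaussian-type tail of $1-F_{\chi_K}$ giving an $o(\|u'-u\|)$ bound when $a_j(u)=0$ (which plays the role of the paper's squeeze with $F_{\chi_K}\le 1$ for $\tilde{\rho}_{\mathrm{in}}=\infty$); this makes your argument self-contained.
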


\begin{proof}
    Assumption~\ref{Ass:index} implies that the index sets $J_{\mathrm{i}}$ and $J_{\mathrm{o}}$ do not change within some neighborhood of $u$. Then, by assumption~\ref{Ass:inout}, we only need to show that $F_{\chi_K}(\tilde{\rho}_{\mathrm{out}}^{(i)})$ and $F_{\chi_K}(\tilde{\rho}_{\mathrm{in}}^{(i)})$ are differentiable. For $j=1,\dots,M$, we define:
    \begin{equation*}
        \rho_j:=\left\{\begin{array}{ll}
            \rho_{j,\mathrm{in}}, & j\in J_{\mathrm{o}}, \\
            \rho_{j,\mathrm{out}}, & j\in J_{\mathrm{i}}.
        \end{array}\right.
    \end{equation*}
    Since both $g_j$ and $-g_j$ are affine, by~\cite[Cor.\ 3.2]{AcHeZi-AMO24}, $F_{\chi_K}(\rho_{j})=F_{\chi_K}(\rho_{j}(u,\boldsymbol{v},\boldsymbol{z}))$ is continuously differentiable at all $(u,\boldsymbol{v},\boldsymbol{z})\in \cV_j(u)\times \bbS^{K-1}\times \bbR^{n}$, where $\cV_j(u)$ is some neighborhood of $u$. The gradient is given by
    \begin{equation}\label{Eq:gradient_j}
        \nabla_u F_{\chi_K}(\rho_{j})=-\frac{f_{\chi_K}(\rho_{j})}{\braket{\nabla_z g_j(u,\tilde{\xi}+\rho_{j} L_K\boldsymbol{v}), L_K\boldsymbol{v}}}\nabla_u g_{j}(u,\tilde{\xi}+\rho_{j} L_K\boldsymbol{v}),
    \end{equation}
    where $\tilde{\xi}:=\bar{\xi}+z$.

    Now, first consider an index $i\in\{1,\dots,N\}$ where $\tilde{\rho}_{\mathrm{out}}^{(i)}<\infty$ and $0<\tilde{\rho}_{\mathrm{in}}^{(i)}<\infty$, which implies that both $J_{\mathrm{i}}$ and $J_{\mathrm{o}}$ are nonempty.
    Thus, by~\eqref{Eq:gradient_j},
    \begin{equation}\label{Eq:grad_rhoout}
        \nabla_u F_{\chi_K}(\tilde{\rho}_{\mathrm{out}}^{(i)}) = -\frac{f_{\chi_K}(\tilde{\rho}_{\mathrm{out}}^{(i)})}{\braket{\nabla_{z}g_{j_{2,i}}(u,\tilde{\xi}_i+\tilde{\rho}_{\mathrm{out}}^{(i)} L_K\boldsymbol{v}_i), L_K\boldsymbol{v}_i}}\nabla_u g_{j_{2,i}}(u,\tilde{\xi}_i+\tilde{\rho}_{\mathrm{out}}^{(i)} L_K\boldsymbol{v}_i),
    \end{equation}
    and analogously for $\nabla_u F_{\chi_K}(\tilde{\rho}_{\mathrm{in}}^{(i)})$.

    Next, consider an index $i$ where $\tilde{\rho}_{\mathrm{in}}^{(i)}=0$. This implies $J_{\mathrm{o}}=\varnothing$, since from the definition of $J_{\mathrm{o}}$, $\rho_{j,\mathrm{in}}>0$ for all $j\in J_{\mathrm{o}}$. Therefore, $J_{\mathrm{o}}=\varnothing$ in some neighborhood of $u$, and thus $\nabla_u F_{\chi_K}(\tilde{\rho}_{\mathrm{in}}^{(i)})=0$. The indicator $\cI_{\{\tilde{\rho}_{\mathrm{in}}^{(i)} > 0\}}$ in~\eqref{Eq:GradDis} explicitly zeroes out this term.

    Lastly, consider an index $i$ where $\tilde{\rho}_{\mathrm{out}}^{(i)}=\infty$ or $\tilde{\rho}_{\mathrm{in}}^{(i)}=\infty$. The case $\tilde{\rho}_{\mathrm{out}}^{(i)}=\infty$ and $J_{\mathrm{i}}=\varnothing$ is the same as the previous one. If $\tilde{\rho}_{\mathrm{out}}^{(i)}=\infty$ and $J_{\mathrm{i}}\neq\varnothing$, then by~\cite[Lem~3.3]{AcHeZi-AMO24}, $\nabla_u F_{\chi_K}(\tilde{\rho}_{\mathrm{out}}^{(i)})=0$. If $\tilde{\rho}_{\mathrm{in}}^{(i)}=\infty$, the same lemma still applies but in a slightly different way. In this case, \cite[Lem~3.3]{AcHeZi-AMO24} gives $\nabla_u F_{\chi_K}(\rho_{j_{1,i}}(u,\boldsymbol{v}_i,z_i))=0$. Also, we have $F_{\chi_K}(\rho_{j_{1,i}}(u,\boldsymbol{v}_i,z_i))=1$. Since for any $j=1,\dots,M$, $F_{\chi_K}(\rho_{j}(u,\boldsymbol{v}_i,z_i))\leq 1$, it still holds that $\nabla_u F_{\chi_K}(\tilde{\rho}_{\mathrm{in}}^{(i)})=0$.

    This shows that the gradient expression~\eqref{Eq:grad_rhoout} and the analogue for $\nabla_u F_{\chi_K}(\tilde{\rho}_{\mathrm{in}}^{(i)})$ are valid for any index $i$, so the proof is complete.
\end{proof}

Note that since $f_{\chi_1}(0) > 0$, the indicator function $\cI_{\{\tilde{\rho}_{\mathrm{in}}^{(i)} > 0\}}$ is necessary for the case $K=1$ to ensure that $\nabla_u F_{\chi_K}(\tilde{\rho}_{\mathrm{in}}^{(i)})$ is correctly zeroed out when $\tilde{\rho}_{\mathrm{in}}^{(i)} = 0$. For $K > 1$, the fact that $f_{\chi_K}(0) = 0$ naturally eliminates the term.

The following proposition shows that \cref{Ass:sample} is almost surely satisfied under the rank-2-constraint qualification. A related proposition was established in \cite[Lem.~4.3]{AcHe-SIAM17}, and we generalize the argument to incorporate the stochastic remainders and to accommodate shifted centers outside the feasible set.

\begin{proposition}
    If the following rank-2-constraint qualification (R2CQ)
    \begin{equation}\label{Eq:R2CQ}
        \begin{gathered}
            \forall z\in \mathscr{H}, i,j\in\{1,\dots,M\},i\neq j:
            g_i(u,z)=g_j(u,z)=0 \Longrightarrow \\ \operatorname{rank}{\{\nabla_{z}g_i(u,z),\nabla_{z}g_j(u,z)\}}=2
        \end{gathered}
    \end{equation}
    holds at $u$, then a sample pair $\{\boldsymbol{v}_i\}_{i=1}^N$, $\{z_i\}_{i=1}^N$ satisfies the assumptions in Proposition~\ref{Prop:differentiability} with probability one.
\end{proposition}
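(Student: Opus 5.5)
We show that each of the three conditions in \cref{Ass:sample} fails only on a null set of the joint law of $(\boldsymbol{v}_i,z_i)$. Since the pairs are i.i.d., a finite union bound over $i=1,\dots,N$ then reduces everything to a single pair $(\boldsymbol{v},z)\sim \cU(\bbS^{K-1})\otimes\mu_{\xi_R}$ with $u$ fixed. Write $g_j(u,w)=\braket{a_j,w}+b_j$ with $a_j=\nabla_z g_j(u,\cdot)$. Along a ray,
\[
g_j(u,\tilde{\xi}+rL_K\boldsymbol{v})=g_j(u,\tilde{\xi})+r\braket{a_j,L_K\boldsymbol{v}},
\]
so every finite crossing radius has the form $r_j=-g_j(u,\tilde{\xi})/\braket{a_j,L_K\boldsymbol{v}}$. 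All conditions can therefore be written as polynomial-type equalities in $(\boldsymbol{v},z)$. Throughout we assume, as implicit in the setting, that $L_K^*a_j\neq 0$ for every $j$.

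\emph{Condition \ref{Ass:index}.} This fails only if $g_j(u,\bar{\xi}+z)=0$ for some $j$. This means the scalar Gaussian $\braket{a_j,z}$, where $z\sim\xi_R$, takes the value $-g_j(u,\bar{\xi})>0$. If the projection of $a_j$ onto $\mathscr{H}_R$ carries positive variance under $\xi_R$, this is a nondegenerate one-dimensional Gaussian, and the event has probability zero. Otherwise $\braket{a_j,z}=0$ almost surely, so $g_j(u,\bar{\xi}+z)=g_j(u,\bar{\xi})<0$ by \eqref{Eq:neg_gj}, and again the event is null.

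\emph{Conditions \ref{Ass:inout} and \ref{Ass:unique}.} Both reduce to coincidences of crossing radii. Condition \ref{Ass:unique} fails only if two distinct constraints $j\neq l$ have a common finite crossing radius. Condition \ref{Ass:inout} fails only if a finite $\tilde{\rho}_{\mathrm{in}}=\rho_{j,\mathrm{in}}$ equals a finite $\tilde{\rho}_{\mathrm{out}}=\rho_{l,\mathrm{out}}$ with $j\in J_{\mathrm{o}}$ and $l\in J_{\mathrm{i}}$, so $j\neq l$. In either case the ray point $w=\tilde{\xi}+rL_K\boldsymbol{v}$ satisfies $g_j(u,w)=g_l(u,w)=0$.

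Condition on $z$, which we may take to satisfy condition \ref{Ass:index}, and fix the pair $j\neq l$. By R2CQ, $a_j$ and $a_l$ are linearly independent. The set of directions $\boldsymbol{v}$ whose ray from $\tilde{\xi}$ hits $\{w: g_j(u,w)=g_l(u,w)=0\}$ is therefore contained in the cone over the affine set
\[
\{\boldsymbol{x}\in\bbR^K:\ \braket{L_K^*a_j,\boldsymbol{x}}=-g_j(u,\tilde{\xi}),\ \braket{L_K^*a_l,\boldsymbol{x}}=-g_l(u,\tilde{\xi})\}.
\]
Equivalently, it lies in the hyperplane
\[
\{\boldsymbol{v}:\ g_l(u,\tilde{\xi})\braket{L_K^*a_j,\boldsymbol{v}}-g_j(u,\tilde{\xi})\braket{L_K^*a_l,\boldsymbol{v}}=0\}.
\]
This hyperplane is proper unless the linear functional vanishes identically. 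When it is proper, it is a great subsphere, which has $\mu_{\cU}$-measure zero. Applying Fubini over $z$ and taking a finite union over pairs $(j,l)$ and over samples $i$ then concludes.

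The main obstacle is the degenerate case in which the functional above is identically zero. This happens when $g_l(u,\tilde{\xi})L_K^*a_j=g_j(u,\tilde{\xi})L_K^*a_l$, that is, when $L_K^*a_j$ and $L_K^*a_l$ are parallel. This can occur even though $a_j$ and $a_l$ are independent in $\mathscr{H}$, since R2CQ speaks only about the full space while rays move only within $\mathscr{H}_K$. I would first try to exclude it via the $z$-randomness. Parallel projections force the scalar relation $g_l(u,\bar{\xi}+z)=c\,g_j(u,\bar{\xi}+z)$ for a fixed constant $c$, which is an affine equation in $z$. The Gaussian $\xi_R$ gives this equation probability zero whenever $a_l-c\,a_j$ has a nonzero component in $\mathscr{H}_R$ in the support of $\xi_R$. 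By R2CQ, $a_l-c\,a_j\neq 0$, and since it is orthogonal to $\mathscr{H}_K$ in the sense that $L_K^*(a_l-c\,a_j)=0$, it must live in $\mathscr{H}_R$. If $\xi_R$ is degenerate in that direction, I would argue instead that the relation then holds deterministically with the values from \eqref{Eq:neg_gj}, and verify directly that the two crossing radii cannot coincide on a positive-measure set. This is the step whose details I expect to need the most care.
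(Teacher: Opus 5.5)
Your handling of condition~\ref{Ass:index} is the same as the paper's. For conditions~\ref{Ass:inout} and~\ref{Ass:unique} you take a different route. The paper fixes an arbitrary $z$ satisfying~\ref{Ass:index}, builds the sign-flipped $\tilde g_{i,j}$, and cites \cite[Lem.~4.3]{AcHe-SIAM17} to get $\mu_{\cU}(M_{i,j})=0$ for \emph{every} such $z$. You instead eliminate $r$ by hand, reduce coincidences to the zero set of the linear functional $\boldsymbol v\mapsto g_l(u,\tilde\xi)\braket{L_K^*a_j,\boldsymbol v}-g_j(u,\tilde\xi)\braket{L_K^*a_l,\boldsymbol v}$, and integrate over $z$ with Fubini.

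Your worry about projected normals is justified, and it is exactly what the paper's citation passes over. That lemma lives on the ray space, so it needs the rank condition for $L_K^*a_j$ and $L_K^*a_l$, and R2CQ on $\mathscr H$ does not give this. The paper's fixed-$z$ claim is in fact false. Take $\xi\sim\cN(0,I_2)$, $\phi_k=e_k$, $K=1$, $g_1(w)=w_1-1$, $g_2(w)=w_1+w_2-1$. The sample $z=0$ satisfies~\ref{Ass:index}, yet along $\boldsymbol v=+1$ (probability $1/2$) both constraints vanish at $r=1$. Only the joint almost-sure statement holds, and your Fubini argument is the right way to obtain it.

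The gap is your final sub-case, and the fallback you sketch there cannot work. Suppose $w:=a_l-c\,a_j\neq 0$ has $\mathrm{Var}\braket{w,\xi_R}=0$ and $g_l(u,\bar\xi)=c\,g_j(u,\bar\xi)$. Then the degenerate relation holds for a.e.\ $z$, and the two crossing radii coincide on a half-sphere. For example, take $\xi\sim\cN(0,\mathrm{diag}(1,1,0))$ on $\bbR^3$, $K=1$, $g_1(w)=w_1-1$, $g_2(w)=w_1+w_3-1$. This satisfies R2CQ and \eqref{Eq:neg_gj}, but condition~\ref{Ass:unique} fails with probability $1/2$. So this case has to be excluded by nondegeneracy; it cannot be verified directly.

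In the paper's setting the case is empty. There $\mathscr H=\mathscr H_K\oplus\mathscr H_R$ with orthogonal KL modes whose coefficients have positive variance. Hence $L_K^*w=0$ and $w\neq 0$ force $\braket{w,\phi_k}\neq 0$ for some $k>K$, so $\mathrm{Var}\braket{w,\xi_R}=\sum_{k>K}\mathrm{Var}(\xi_k)\braket{w,\phi_k}^2>0$. You nearly said this (``it must live in $\mathscr H_R$''); state it and drop the fallback.

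Two smaller corrections:
\begin{itemize}
\item R2CQ gives independence of $a_j,a_l$ only when the two hyperplanes meet. It does not, for example, for upper and lower bounds at one point, where $a_l=-a_j$. More generally, if $a_l=c\,a_j$, R2CQ forces $g_l-c\,g_j$ to be a nonzero constant, so the degenerate relation never holds.
\item The assumption $L_K^*a_j\neq 0$ is neither implicit nor needed. Such a $g_j$ is constant along every ray, so it never produces a finite crossing radius.
\end{itemize}
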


\begin{proof}
    We only need to show that a single sample pair $(\boldsymbol{v},z)$ satisfies the assumptions with probability 1. We first show that assumption~\ref{Ass:index} is satisfied, i.e., $g_j(u,\tilde{\xi})\neq 0$ for all $j \in \{1,\dots,M\}$ with probability 1. Since $z$ is drawn from the remainder $\xi_R$, and $g_j(u,\cdot)$ is an affine functional, the evaluation $g_j(u,\tilde{\xi})=g_j(u,\bar{\xi}+z)$ is a one-dimensional random variable. If the variance of $g_j(u,\tilde{\xi})$ is zero, then $g_j(u,\tilde{\xi})$ is deterministic and almost surely equals $g_j(u,\bar{\xi})$. By~\eqref{Eq:neg_gj}, $g_j(u,\bar{\xi})<0$, so it cannot equal $0$. Alternatively, if the variance of $g_j(u,\tilde{\xi})$ is positive, then $g_j(u,\tilde{\xi})$ is a non-degenerate one-dimensional Gaussian random variable, which implies that $\bbP(g_j(u,\tilde{\xi})=0)=0$. Taking the union over all $M$ constraints, we conclude that $g_j(u,\tilde{\xi}) \neq 0$ for all $j\in\{1,\dots,M\}$ with probability 1.

    Next, we show that for all $z$ satisfying assumption~\ref{Ass:index}, assumptions~\ref{Ass:inout} and \ref{Ass:unique} are satisfied for $\boldsymbol{v}$ almost surely. We only need to show that for any $i,j\in\{1,\dots,M\}$ with $i\neq j$, the set of rays intersecting both constraints at the same finite distance,
    \begin{equation*}
        M_{i,j}:=\{\boldsymbol{v}\in\bbS^{K-1}:g_i(u,\tilde{\xi}+rL_K\boldsymbol{v})=g_j(u,\tilde{\xi}+rL_K\boldsymbol{v})=0\text{ for some }r>0\},
    \end{equation*}
    satisfies $\mu_{\cU}(M_{i,j})=0$. Define
    \begin{equation*}
        \tilde{g}_{i,j}(u,\tilde{\xi}+rL_K\boldsymbol{v}):=\max_{k\in\{i,j\}}{\{-\sgn{(g_k(u,\tilde{\xi}))}g_k(u,\tilde{\xi}+rL_K\boldsymbol{v})\}},
    \end{equation*}
    then $\tilde{g}_{i,j}(u,\tilde{\xi})<0$, and $\tilde{g}_{i,j}$ satisfies the R2CQ property~\eqref{Eq:R2CQ}. Therefore, by~\cite[Lem.~4.3]{AcHe-SIAM17}, $\mu_{\cU}(M_{i,j})=0$. This ensures that rays almost never intersect two constraint boundaries at the exact same finite distance, which guaranties that almost surely, finite values of $\tilde{\rho}_{\mathrm{in}}$ and $\tilde{\rho}_{\mathrm{out}}$ are achieved at unique constraint indices (satisfying assumption~\ref{Ass:unique}) and $\tilde{\rho}_{\mathrm{in}}$ cannot equal $\tilde{\rho}_{\mathrm{out}}$ unless both are infinite (satisfying assumption~\ref{Ass:inout}). Thus, the assumptions in Proposition~\ref{Prop:differentiability} are satisfied by a single sample pair $(\boldsymbol{v},z)$ with probability 1, and the proof is complete.
\end{proof}

\subsection{Computational complexity of {{\footnotesize HI}SRD}}
The evaluation of $\tilde\varphi_N$ using \hiSRD\ entails: (1) drawing independent samples from the $K$-dimensional SRD subspace $\mathscr H_K$ along with samples $z$ from the remainder space $\mathscr H_R$; (2) computing the entry and exit distances $\tilde\rho_\text{in}$ and $\tilde\rho_\text{out}$ as defined in \eqref{Eq:rhoModified}; and (3) evaluating \eqref{Eq:phi_u_dis2}, which requires computing $F_{\chi_K}$, the cumulative distribution function of the Chi distribution. Finally, (4), the gradient computation, as specified in \eqref{Eq:GradDis}, reuses $\tilde\rho_\text{in}$ and $\tilde\rho_\text{out}$, involves evaluation of the $\chi$-probability density function $f_{\chi_K}$, and consists primarily of negligible additional operations.

For the affine constraints considered in this work, the entry and exit distances $\tilde\rho_\text{in}$ and $\tilde\rho_\text{out}$ in step (2) can be efficiently computed by maximization and minimization operations on suitable index sets. The SRD (and \hiSRD) extends to non-affine constraint functions $g_i(u,\cdot)$, for example, convex ones. In such cases, computing these distances may require iterative algorithms, which can substantially increase the computational cost. Steps (3) and (4) usually do not dominate the computation, since evaluating special functions like those in the $\chi$-distribution is computationally inexpensive. Consequently, the dominant computational cost is typically associated with step (1), i.e., generating independent samples from $\xi$. This cost depends heavily on the specific setting of the problem.

For the elliptic PDE control problem with a constant PDE operator $A$ as in \eqref{eq:redstate}, one natural approach is to use a KL expansion for the random function $\xi(\omega)$ and then solve the linear PDE for each SRD sample (which can be done efficiently, for example, by reusing a decomposition of $A$). As an alternative, one may employ a KL expansion in the state variable $y$, which already incorporates the PDE solution. Such a KL expansion may exhibit more rapidly decaying coefficients and enables direct sampling from the distribution of the states, which are subject to the probabilistic constraint (see also \cite[Fig.~1]{HeStWe-SIAM25}). This KL expansion of the state variable can be precomputed once and reused throughout the optimization.

In contrast, for Gaussian process regressions, the covariance matrix and the corresponding transform operator $A(u)$ in \eqref{Eq:GPtransform} depend on the kernel hyperparameters $u$. Since $u$ changes in every optimization iteration, one cannot rely on a precomputed expansion or a fixed matrix factorization. Instead, we recompute the covariance matrix and its factorization in every iteration, and apply $A(u)$ to the matrix consisting of reference samples $\xi_{\mathrm{ref}}$ through a matrix-matrix multiplication, which amounts to the main computational cost.

Overall, all computations needed to approximate the probability function and its gradient scale linearly with $N$, the number of Monte Carlo samples, and are largely independent of $K$, the dimension of the subspace $\mathscr H_K$ on which the SRD is implemented.

\section{Numerical results for the control problem}\label{SEC:NumCtrl}
We now present a numerical example for a PDE control problem of the form \cref{ex:optcont}. We consider the physical domain $\D=(0,1)^2\subset \mathbb R^2$ and divide the boundary into $\partial \D_2=\{0\}\times [0,1]$ and $\partial \D_1=\partial \D\setminus \partial \D_2$.
We consider the risk-neutral, tracking-type optimal control problem
\begin{equation}\label{eq:ex1-J}
        \begin{aligned}
        & \underset{{u\in L^2(\D), \,y\in X}}{\text{minimize}}\:
        & & 
        \frac 12 \int_\Omega
  \int_\D (y(\omega) - y_d)^2 \,d x\,d\mu
  + \frac\alpha 2 \int_\D u^2\,d x 
        \end{aligned}
\end{equation}
subject to the governing equations with uncertain Neumann data $\partial\D_2$:
\begin{alignat}{2}
  -\Delta y(\omega) &=
  f + u \qquad &&\text{in } \D, \label{eq:state_linear1}\\
  y(\omega)  &= 0  &&\text{in }\partial\D_1, \label{eq:state_linear2}\\
  \nabla y(\omega)\cdot{n} &= \xi(\omega) &&\text{in
  }\partial\D_2, \label{eq:state_linear3}
  \end{alignat}
and the joint state chance constraints \eqref{eq:chance}. 
Here, $y_d = \frac{1}{10} \cos(2\pi
x_1)\sin(2\pi x_2)$, $f\equiv 0$, $\alpha=10^{-5}$. 
The uncertain parameter field enters as Neumann data on the one-dimensional domain
$\partial \D_2$. This data follows an infinite-dimensional Gaussian
distribution with mean $\xi_0 \equiv 0$, and a covariance operator 
given by the inverse elliptic PDE operator $\mathcal C_0 =
\gamma(-\partial_{{x_2 x_2}})^{-1}$, with homogeneous Dirichlet conditions at the boundary of $\partial \D_2$, i.e., at the
two points $(0,0)$ and $(0,1)$, and with $\gamma=4$. This covariance
operator has the eigenfunctions $\sin(k x_2)$, $k=1,2,\ldots$ with
corresponding eigenvalues $4(k\pi)^{-2}$. 

This problem has also been considered in \cite{HeStWe-SIAM25}, where it is proven that the probability function is well-defined if one understands the chance constraint \eqref{eq:chance} in an almost everywhere sense, and that the PDE optimal control problem has a unique solution. Moreover, it is shown that the integration over $\Omega$ in \eqref{eq:ex1-J} can be performed analytically and that the state $y$ is an affine function of $u$ and $\xi$. Thus, this problem can be brought into the form \eqref{eq:Jopt}, \eqref{Eq:phi_u}.

\subsection{Numerical setup}
The governing equations \eqref{eq:state_linear1}, \eqref{eq:state_linear2} and \eqref{eq:state_linear3} are discretized using finite differences on a regular grid with $64\times 64$ points. The bound constraints are enforced at the same points and thus $M=4096$. Linear systems are solved with a direct sparse solver, and a standard adjoint method is used to compute the gradient with respect to the distributed control $u\in L^2(\D)$.

\subsection{Estimation of probability function}
Here, we only focus on the probability estimation for the nominal control $u=\frac 15 \sin(2\pi x_1)\cos(\pi x_2)$ with state bounds $\underline y\equiv -0.3$, $\bar y\equiv 0.3$ in \eqref{eq:chance}.
In \cite{HeStWe-SIAM25}, the bias resulting from truncation of the KL expansion of the Gaussian random field were illustrated. Here, in \cref{fig:control}, we show the superior behavior of the proposed method compared to KL expansion truncation combined with finite-dimensional SRD. As can be seen in the figure, the proposed method is an unbiased estimator. The bias of the finite-dimensional SRD estimators based on truncation of the KL expansion starts to dominate the variance of the estimator when $N$ is sufficiently large. In \cref{fig:control}, we choose $K=10$ for the hybrid estimator. The results for $K=5,15,20$ in the hybrid method are not shown but would be very similar. All SRD samplers used a quasi-Monte Carlo sampling of the uniform distribution on the sphere, which has been observed to reduce the variance in SRD estimation. Choosing a standard Monte Carlo method to sample the unit sphere results in qualitatively similar results but generally larger variance--see for instance the comparison in \cite{HeStWe-SIAM25}.

\subsection{Optimal control under joint chance state constraints}
Estimating $\tilde\varphi_N(\cdot)$ and its gradient $\nabla\tilde\varphi_N(\cdot)$ enables us to compute derivatives of the constraint with respect to the control $u$.
The corresponding gradient field for the reference parameters is shown on the right in \cref{fig:control}. To solve the chance-constrained control problem, one may employ a sequential quadratic programming (SQP) algorithm. For a detailed discussion on the optimal control problem, we refer to \cite{HeStWe-SIAM25}, where $\tilde\varphi_N(\cdot)$ is approximated by using a truncated KL expansion combined with a finite-dimensional SRD. In the next section, we proceed with an extensive numerical study of the new method applied to constrained Gaussian process regression.

\begin{figure}[tbp]\centering
  \begin{tikzpicture}\centering
    \begin{loglogaxis}[
        xlabel=\# samples N,
        ylabel={RMSE},
        xmin=10, xmax=1e5, ymax=0.2, ymin=5e-5,
        height=6cm, width=8cm,
        ticklabel style = {font=\small}]
      \addplot[red!80!black, very thick] table [x=N,y=dim20] {data/biyupylow03kXX_rmse.txt};
      \addlegendentry{$K=20$}
      \addplot[red!80!black, very thick, dotted] table [x=N,y=dim15] {data/biyupylow03kXX_rmse.txt};
      \addlegendentry{$K=15$}
      \addplot[red!80!black, very thick, dashed] table [x=N,y=dim10] {data/biyupylow03kXX_rmse.txt};
      \addlegendentry{$K=10$}
      \addplot[black, very thick] table [x=N,y=hiSRD10] {data/biyupylow03kXX_rmse.txt};
      \addlegendentry{\hiSRD(10)}
    \end{loglogaxis}
  \node at (9.5,2.5) {\includegraphics[width=3.5cm]{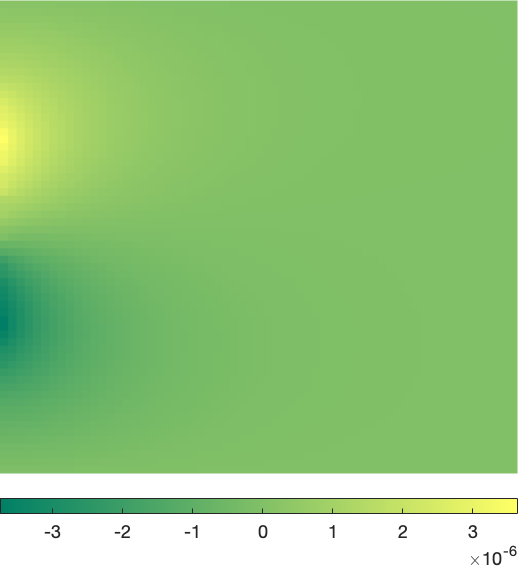}};
\node at (8.2,2.7) {$\partial\D_2$};
\node at (10.7,4) {$\partial\D_1$};
  \end{tikzpicture}

  \caption{Left: Root mean squared error (RMSE, $y$-axis) of probability estimation \eqref{eq:chance} for different number $N$ of MC samples ($x$-axis). Compared are the finite-dimensional SRD for various KL-mode truncations $K$, and the proposed hybrid method. The ``exact'' probability, computed with $10^8$ standard Monte Carlo samples, is $p\approx 0.648926$. Right: Gradient at the nominal control, which is largest close to $\partial \D_2$, where the stochastic Neumann boundary force is applied.}\label{fig:control}
\end{figure}
\section{Numerical results for Gaussian process regression}\label{SEC:NumGPR}
We now apply \hiSRD\ to kernel parameter optimization in Gaussian process regression subject to joint chance constraints, as outlined in~\cref{ex:GPR}. We assume that the covariance function of the prior process is 
\begin{equation}\label{Eq:GPkernel}
   \mathcal K_u(x,x')=\sigma^2\exp\left(\frac{-\|{x-x'}\|_2^2}{2l^2}\right)+\sigma_n^2\delta_{x,x'},
\end{equation}
where $u:=(\log{l},\log{\sigma},\log{\sigma_n})$ are the kernel parameters to optimize, and the prior mean is $\xi_0\equiv 0$. Choosing $\cD=[0,1]$ we consider noiseless observations of the following function, also used in~\cite{PeYaZh-TAML20}:
\begin{equation*}
    f(x):=\frac{1}{1+(10x)^4}+\frac{1}{2}\exp{\Big[ -100 \Big( x-\frac{1}{2} \Big)^2 \Big]},
\end{equation*}
at positions
\begin{equation*}
    \big\{ x^{(i)} \big\}_{i=1}^7=\big\{ {(j-1)/}{5}+\epsilon_j \big\}_{j=1}^6\cup \big\{ {1}/{2} \big\},
\end{equation*}
where $\epsilon_j\sim\cN(0,0.03^2)$ for $j=2,3,4,5$, and $\epsilon_1=\epsilon_6=0$. That means, we set the observation as $\boldsymbol{y}=f(\boldsymbol{X})$, where $\boldsymbol{X}=(x^{(1)},\dots,x^{(7)})$.

Recall that $K_u=\cK_u(\boldsymbol{X},\boldsymbol{X})$ denotes the covariance matrix for the points in $\boldsymbol X$. We also use $k_u(x):=\cK_u(\boldsymbol{X},x)-\sigma_n^2\delta_{\boldsymbol{X},x}\in\bbR^N$ to denote the noise-free cross-covariance vector between $\boldsymbol{X}$ and other points $x\in\cD$. Conditioned on the observations, the posterior Gaussian process is $\xi_{\mathrm{post}}(u)=\cN(\xi^\star(u),\cK^\star_u)$, where
\begin{gather*}
    \xi^\star(x;u)=\xi_0(x)+k_u(x)^T K_u^{-1}\boldsymbol{y}, \quad
    \cK^\star_u(x,x')=\sigma^2\exp\Big(\frac{-\norm{x-x'}_2^2}{2l^2}\Big)-k_u(x)^T K_u^{-1}k_u(x').
\end{gather*}
We use $\xi_{\mathrm{ref}}=\xi_{\mathrm{post}}(u_0)-\xi^\star(u_0)$ as the reference Gaussian process with fixed kernel parameters $u_0=(-3,-3,-10)$. We take $A(u) = S_u S_{u_0}^{-1}$ as the transform in~\eqref{Eq:GPtransform}, where $S_u$ is an operator that satisfies $S_u S_u^* = \cK^\star_u$. 
In numerical implementation, we compute $S_u$ using the Cholesky decomposition of the discretized covariance matrix.

\subsection{Numerical setup}
We use $n$ uniformly distributed points to discretize $\cD$. Unless otherwise specified, we set $n=1024$. We evaluate the state constraint on the same grid and use a one-sided bound, yielding $M=n$. Unless otherwise specified, we use $K=10$ for the low-rank truncation dimension. After discretizing the Gaussian process, we use the Cholesky decomposition of the covariance matrix to generate samples of $\xi_{\mathrm{ref}}$ and compute $A(u)$. Both MC and QMC (quasi-Monte Carlo) are used to generate samples for the uniform distribution on the sphere and for the remainder. To obtain samples of the uniform distribution on the sphere, we first generate MC or QMC samples from the standard Gaussian distribution in $\bbR^K$, which we then project onto the unit sphere. For QMC, we use the Halton sequence.

In addition to the chance constraint~\eqref{Eq:ChanceConstrGPR}, we employ the following constraint (also used in~\cite{PeYaZh-TAML20})  on the distance of the posterior mean from the observations, ensuring that the posterior Gaussian process fits the data sufficiently well:
\begin{equation}\label{Eq:HeurConstr}
    \norm{\mathbb{E}[\xi_\text{post}(\boldsymbol{X})]-\boldsymbol{y}}_\infty \le \epsilon,
\end{equation}
where $\epsilon>0$ is a fixed small constant.  This prevents the noise variance $\sigma_n$ from diverging during optimization. Large $\sigma_n$ are not relevant likelihood minimizers because the corresponding posterior process does not adequately fit the data $\boldsymbol{y}$. In our experiment, we set $\epsilon=0.01$.

We use {\tt tinygp}~\cite{tinygp}, a lightweight library built on top of {\tt JAX}, to perform operations on Gaussian processes such as conditioning. We also use {\tt JAX} to automatically compute the gradients of the probability estimator $\tilde\varphi_N(u)$ with respect to the kernel parameters for optimization. While in this work we chose the quadratic exponential kernel \eqref{Eq:GPkernel}, the use of {\tt tinygp} makes it easy to adopt more complex kernels, or mixtures of kernels with an arbitrary number of parameters, without increasing the computational cost of calculating gradients.

\subsection{Estimation of probability function and its gradient}\label{subsec:numerical-variance}
Next, we examine how accurately the various methods approximate the probability function $\varphi(u)$ for the lower bound $\underline{\xi}=-0.12$ and the reference kernel parameters $u=(-3,-3,-10)$. For this choice of parameters, we obtain $\varphi(u)\approx 0.977$, i.e., only about 2\% of the Gaussian process samples violate the joint chance constraints.

\begin{figure}[tbp]
    \centering
        \begin{tikzpicture}
            \begin{loglogaxis}[
                xlabel={truncation dimension},
                ylabel={RMSE},
                xmin=1, xmax=1000,
                ymin=1e-4, ymax=3e-2,
                width=8cm, height=6.5cm,
                legend pos=north east,
                legend cell align={left},
                legend style={font=\small},
                grid style=dotted,
            ]
            \addplot[name path=MC, color=black!50!white, mark=none, thick]
                table[x=k, y=MC] {data/RMSE_prob_GPR.txt};
                \addlegendentry{MC}
            \path[name path=axis] (axis cs:1,1e-4) -- (axis cs:1000,1e-4);
            \addplot[black, opacity=0.04, on layer=axis background, forget plot] fill between[of=MC and axis];
            \addplot[color=black!30!blue, mark=none, very thick]
                table[x=k, y=hiSRD_MC] {data/RMSE_prob_GPR.txt};
                \addlegendentry{\hiSRD-MC}
            \addplot[color=black!30!green, mark=none, very thick]
            table[x=k, y=hiSRD_QMC] {data/RMSE_prob_GPR.txt};
            \addlegendentry{\hiSRD-QMC}
            \addplot[color=blue!40, mark=none, thick, dashed]
                table[x=k, y=SRD_MC] {data/RMSE_prob_GPR.txt};
                \addlegendentry{SRD-MC}
            \addplot[color=green!60, mark=none, thick, dash dot]
            table[x=k, y=SRD_QMC] {data/RMSE_prob_GPR.txt};
            \addlegendentry{SRD-QMC}
            \end{loglogaxis}
        \end{tikzpicture}
    \hspace{.04\textwidth}
        \centering
        \begin{tikzpicture}
            \begin{loglogaxis}[
                xlabel={truncation dimension},
                xmin=1, xmax=20,
                ymin=1e-4, ymax=3e-2,
                width=4.5cm, height=6.5cm,
                xtick={1, 5, 10, 20},
                xticklabels={1, 5, 10, 20},
                minor xtick={2, 3, 4, 6, 7, 8, 9},
                legend pos=north east,
                legend cell align={left},
                legend style={font=\small},
                grid style=dotted,
            ]
            \path[name path=axis] (axis cs:1,1e-4) -- (axis cs:1000,1e-4);
            \addplot[color=black!30!blue, mark=none, very thick]
                table[x=k, y expr=sqrt(\thisrow{V_total})] {data/RMSE_split.txt};
                \addlegendentry{$V_{\text{total}}$}
            \addplot[color=red!60!blue, mark=none, thick, dash dot dot]
            table[x=k, y expr=sqrt(\thisrow{V_SRD})] {data/RMSE_split.txt};
            \addlegendentry{$V_{\text{SRD}}$}
            \addplot[color=orange!70!blue, mark=none, thick, densely dashed]
            table[
                x=k, 
                y expr=sqrt(\thisrow{V_rem}), 
            ] {data/RMSE_split.txt};
            \addlegendentry{$V_{\text{rem}}$}
            \end{loglogaxis}
        \end{tikzpicture}
    \caption{Gaussian process regression: Shown on the left is a comparison of the root mean squared error (RMSE; $y$-axis) of probability estimation with $N=10^4$ samples versus KL dimension $K$ ($x$-axis) for finite-dimensional SRD (dashed) and for the proposed method (solid), with both MC and QMC sampling. Shown for comparison is the result with standard Monte Carlo (gray horizontal line). Shown on the right are, for $N=500$ samples, the individual components discussed in \cref{subsec:variance-analysis} contributing to the variance of \hiSRD-MC sampling. The larger variance compared to the left figure is due to the smaller $N$.}
    \label{Fig:RMSE_prob_GPR}
\end{figure}
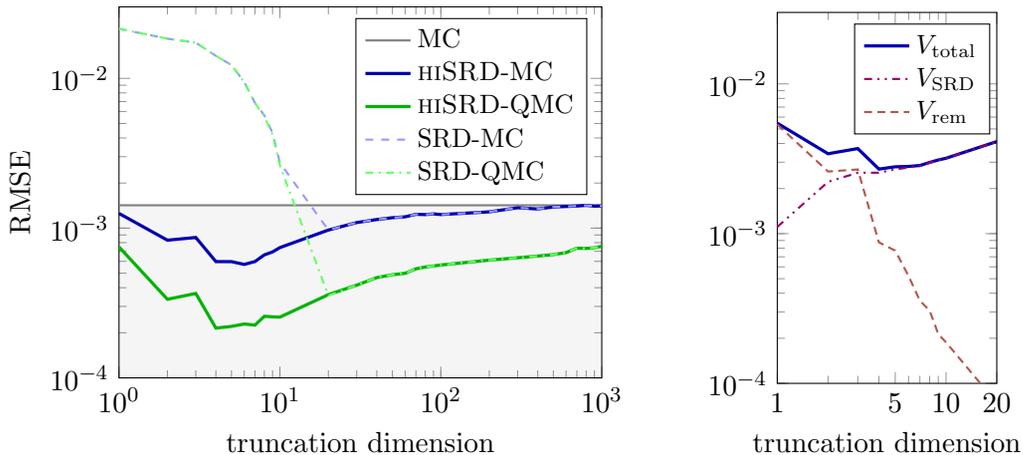

In \cref{Fig:RMSE_prob_GPR}, which is an extension of \cref{fig:intro} shown in the introduction, we compare the root mean square error obtained from 100 repeats with $N=10^4$ samples each for standard MC sampling, finite-dimensional SRD and \hiSRD. We use $10^7$ samples from the standard MC method to compute a highly accurate reference probability. As can be seen in the left subfigure, the proposed \hiSRD\ estimator is unbiased and consistently achieves lower RMSE, while the finite-dimensional SRD estimator has a large bias for small $K$. This variance reduction is more significant when combined with QMC.

On the right in \cref{Fig:RMSE_prob_GPR}, we show the individual contributions to the variance of the \hiSRD\ estimator, as discussed in~\cref{subsec:variance-analysis}. To separate the expected SRD variance $V_{\text{SRD}}$ from the remainder variance $V_{\text{rem}}$, we draw $N_{\text{rem}}=500$ samples from the remainder and, for each, draw $N_{\text{SRD}}=100$ independent spherical samples and estimate the RMSE from 100 repeats. Let $V_{\text{total}}'$ denote the empirical total variance of this nested estimator.
By computing the sample variance of the individual spherical evaluations, which estimates the theoretical $V_{\text{SRD}}$ defined in~\eqref{eq:VSRD}, we isolate the empirical remainder variance via $V_{\text{rem}} = V_{\text{total}}' - \frac{1}{N_{\text{SRD}}}V_{\text{SRD}}$. The total variance for the standard setting $N_{\text{SRD}}=1$ is then recovered by $V_{\text{total}} = V_{\text{rem}} + V_{\text{SRD}}$. The figure shows the respective contributions of the spherical and remainder variances to the total variance for different truncations $K$. As can be seen, the remainder variance $V_{\text{rem}}$ decays quickly as $K$ increases, and $V_{\text{SRD}}$ dominates the variance for moderate $K$.

We also perform a similar variance comparison to \cref{Fig:RMSE_prob_GPR} for the gradient estimation with respect to the kernel parameters. These results are shown in \cref{Fig:RMSE_grad_GPR}, where the plotted RMSE represents the expected $\ell_2$-norm of the difference between the estimated gradient and a highly accurate reference gradient obtained by using $10^7$ samples from \hiSRD. Note that the standard MC estimator is not differentiable, so this method is excluded from this comparison. Similar to the estimation $\tilde \varphi_N(u)$, \hiSRD\ provides a highly accurate gradient estimation, with QMC sequences further reducing the variance.

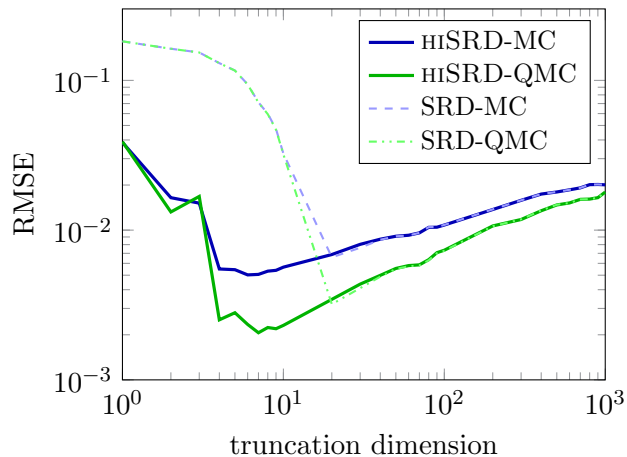
\begin{figure}[tbp]
    \centering
    \begin{tikzpicture}
        \begin{loglogaxis}[
            xlabel={truncation dimension},
            ylabel={RMSE},
            xmin=1, xmax=1000,
            ymin=1e-3, ymax=3e-1,
            width=8cm, height=6.5cm,
            legend pos=north east,
            legend cell align={left},
            legend style={font=\small},
            grid style=dotted,
        ]
        \addplot[color=black!30!blue, mark=none, very thick]
            table[x=k, y=hiSRD_MC] {data/RMSE_grad_GPR.txt};
            \addlegendentry{\hiSRD-MC}
        \addplot[color=black!30!green, mark=none, very thick]
           table[x=k, y=hiSRD_QMC] {data/RMSE_grad_GPR.txt};
           \addlegendentry{\hiSRD-QMC}
        \addplot[color=blue!40, mark=none, thick, dashed]
            table[x=k, y=SRD_MC] {data/RMSE_grad_GPR.txt};
            \addlegendentry{SRD-MC}
        \addplot[color=green!60, mark=none, thick, dash dot dot]
           table[x=k, y=SRD_QMC] {data/RMSE_grad_GPR.txt};
           \addlegendentry{SRD-QMC} 
        \end{loglogaxis}
    \end{tikzpicture}
    \caption{Gaussian process regression: Companion plot to \cref{Fig:RMSE_prob_GPR}, but for estimation of the gradient. Probability estimation via standard MC sampling is non-differentiable and therefore not included, since its RMSE is unbounded.
     }
    \label{Fig:RMSE_grad_GPR}
\end{figure}

\subsection{Optimal kernel parameters under chance constraints}
To solve the constrained optimization problem, we use a built-in solver in {\tt SciPy} implementing the trust-region interior point method \cite{ByHrNo-SIAM99}. We approximate the Hessian of the chance constraint using symmetric-rank-1 updates.

We optimize the GP kernel parameters using a one-sided lower bound $\underline{\xi}$ and set $p = 0.95$. In~\cref{Fig:KernelOpt}, we present the posterior Gaussian process with the resulting optimal kernel parameters, subject to the chance constraint for two different lower bounds $\underline{\xi}$, which we choose as constants. Because $p$ is close to 1, the bound constraint is satisfied by most posterior samples. It can be observed that, in order to keep the joint chance constraint satisfied under a stricter lower bound, the optimization algorithm reduces the variance of the posterior process. This effect is also visible in \cref{tab:GP}, where we report the optimized parameter values $u$ for several (constant) lower bounds. As the lower bound is increased, not only does the variance $\sigma$ decrease, but the length-scale parameter $l$ also becomes smaller. The optimization 
chooses rather small values of $\sigma_n$, indicating that the posterior process fits the data well. These small values for the optimized $\sigma_n$ may be a consequence of imposing \eqref{Eq:HeurConstr}.

\begin{table}[htbp]
    \centering
    \caption{Optimal kernel parameters and objective function \eqref{Eq:ObjFunc} values for different lower bounds $\underline \xi$. Results use \hiSRD\ with $N=10^4$ samples. $\underline \xi=-\infty$ corresponds to the unconstrained Gaussian process regression.} \label{tab:GP}
    \begin{tabular}{c|cccc}
        $\underline\xi$ &$l$ &$\sigma$ &$\sigma_n$ & $\mathcal J(u)$ 
        \\[0.2em]
        \hline 
        $-\infty$ & $\num{1.16e-01}$ & $\num{4.53e-01}$ & $\num{5.55e-05}$ & $\num{3.19e+00}$\\[0.2em]
        $-0.2$ & $\num{8.54e-02}$ & $\num{1.18e-01}$ & $\num{5.57e-05}$ & $\num{3.60e+01}$\\[0.2em]
        $-0.05$ & $\num{6.94e-02}$ & $\num{3.07e-02}$ & $\num{5.71e-05}$ & $\num{6.49e+02}$\\[0.2em]
        $-0.01$ & $\num{6.71e-02}$ & $\num{7.10e-03}$ & $\num{1.23e-05}$ & $\num{1.24e+04}$\\[0.2em]
    \end{tabular}
    \label{Table:kernel parameter}
\end{table}

\begin{figure}[tbp]
    \centering
    \includegraphics[width=0.95\linewidth]{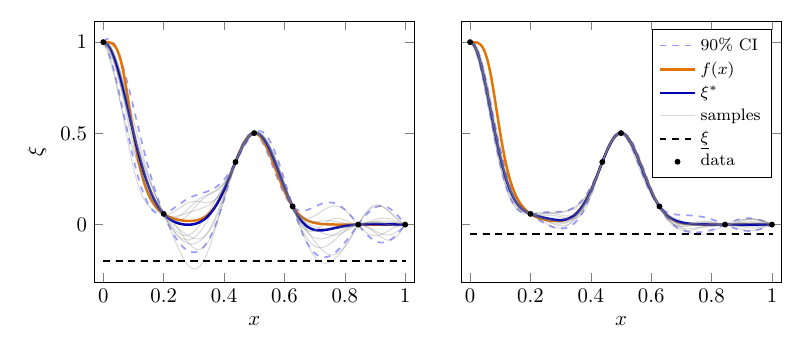}
    \caption{Posterior Gaussian processes with optimal kernel parameters, with different lower bounds $\underline{\xi}$ for the chance constraint. Each figure shows the posterior mean (blue), the 90\% credible interval of the Gaussian process (dashed), and 10 samples from the Gaussian process (gray). Left: $\underline{\xi}\equiv-0.2$. Right: $\underline{\xi}\equiv-0.05$.}
    \label{Fig:KernelOpt}
\end{figure}

\section{Conclusions}
Approximating probabilistic functions over high- or infinite-dimensional distributions can be very challenging. For finite-dimensional elliptical distributions, the spherical-radial decomposition (SRD) has been shown to be highly effective for the differentiable approximation of probabilistic functions that occur in risk-averse stochastic optimization. However, because the standard finite-dimensional SRD relies on unit spheres in finite-dimensional spaces, it was previously unclear whether this approach can be extended to infinite-dimensional settings. This paper demonstrates that such a generalization is indeed possible. Rather than working with sequence spaces, as one might first expect, our proposed method combines SRD on suitable subspaces with standard Monte Carlo sampling on the subspace complement. We presented and analyzed this generalized framework, demonstrating that it preserves many of the advantageous characteristics of finite-dimensional SRD. In numerical case studies involving an optimal control problem governed by a PDE and a constrained Gaussian process regression, we illustrated the method's accuracy and discussed its computational complexity for approximating joint chance constraints. 
Future work could concentrate on applying the chance-constrained Gaussian process regression approach to problems in higher spatial dimensions. Another promising direction would be to generalize the method to nonlinear mappings (for example, general convex maps) from the uncertain parameters to the state variable.

\section*{Acknowledgments} The authors thank Timo Schorlepp for his helpful comments on an earlier version of this manuscript.

\bibliographystyle{siamplain}
\bibliography{refs-HiSRD}

\appendix
\section{Proof of \texorpdfstring{\cref{thm:estimation-degeneration}}{}}\label{app:1}
For simplicity, we use $x\lesssim y$ to denote that $x\leq Cy$ for a uniform constant $C>0$ independent of $K$ and $c$. By \cref{LEMMA:mean_K} and \eqref{Eq:Wk} we have
\begin{equation}\label{Eq:W_diff}
    \begin{aligned}
        W_{\mathrm{MC}}(c)-W_K(c) &= \bbE_{\tau_{r,K}}\Big[ F_{\chi_K}\Big( \frac{1}{\sqrt{\tau_{r,K}}}c \Big)\Big( 1-F_{\chi_K}\Big( \frac{1}{\sqrt{\tau_{r,K}}}c \Big) \Big) \Big] \\
        &= \bbE_{\tau_{r,K}}\Big[ F_{\chi_K^2}\Big( \frac{1}{\tau_{r,K}}c^2 \Big)\Big( 1-F_{\chi_K^2}\Big( \frac{1}{\tau_{r,K}}c^2 \Big) \Big) \Big],
    \end{aligned}
\end{equation}
where $\chi_K^2$ is the $K$-dimensional $\chi$-squared distribution. Since the square of a standard normal distribution has mean 1 and variance 2, and all moments of $\chi_K$ are finite, by the Berry-Esseen theorem~\cite[Sec.~3.4.4]{Durrett-book19}, we obtain
\begin{equation*}
    \bigg| F_{\chi_K^2}\left( \frac{1}{\tau_{r,K}}c^2 \right) - \Phi \bigg( \frac{\frac{c^2}{\tau_{r,K}}-K}{\sqrt{2K}} \bigg) \bigg| \lesssim \frac{1}{\sqrt{K}},
\end{equation*}
where $\Phi$ is the CDF of the standard normal distribution. Therefore, by~\eqref{Eq:W_diff},
\begin{equation}\label{Eq:W_diff2}
    \begin{aligned}
        W_{\mathrm{MC}}(c)-W_K(c) &= \bbE_{\tau_{r,K}}\bigg[ \Phi \bigg( \frac{\frac{c^2}{\tau_{r,K}}-K}{\sqrt{2K}} \bigg)\bigg( 1-\Phi \Big( \frac{\frac{c^2}{\tau_{r,K}}-K}{\sqrt{2K}} \Big) \Big) \bigg] + \cO \Big( K^{-\frac{1}{2}} \Big)\\
        &=: \bbE_{\tau_{r,K}}\left[ \Phi \left( s(\tau_{r,K}) \right)\left( 1-\Phi \left( s(\tau_{r,K}) \right) \right) \right] + \cO \Big( K^{-\frac{1}{2}} \Big),
    \end{aligned}
\end{equation}
where
\begin{equation}
    s(t):=\frac{{c^2}/{t}-K}{\sqrt{2K}}=\frac{c^2}{\sqrt{2K}}\frac{1}{t}-\sqrt{{K}/{2}}.
\end{equation}
We then rewrite the first term in~\eqref{Eq:W_diff2} in integral form and split it into two parts:
\begin{equation*}
    \begin{aligned}
        \bbE_{\tau_{r,K}}\left[ \Phi \left( s(\tau_{r,K}) \right)\left( 1-\Phi \left( s(\tau_{r,K}) \right) \right) \right] &= \int_0^1 \Phi \left( s(t) \right)\left( 1-\Phi \left( s(t) \right) \right) f_{\tau_{r,K}}(t)dt \\
        &= \int_{|s(t)|\leq \sqrt{\log{K}}} \Phi \left( s(t) \right)\left( 1-\Phi \left( s(t) \right) \right) f_{\tau_{r,K}}(t)dt \\
        &\!\!\!\! + \int_{|s(t)|\geq \sqrt{\log{K}}} \Phi \left( s(t) \right)\left( 1-\Phi \left( s(t) \right) \right) f_{\tau_{r,K}}(t)dt
        =: I_1+I_2.
    \end{aligned}
\end{equation*}
Here, $f_{\tau_{r,K}}$ is the PDF of $\mathrm{Beta} \left( \frac{r}{2},\frac{K-r}{2} \right)$,
\begin{equation*}
    f_{\tau_{r,K}}(x)={B\Big( \frac{r}{2},\frac{K-r}{2} \Big)}^{-1}x^{\frac{r}{2}-1}(1-x)^{\frac{K-r}{2}-1},\quad 0\leq x\leq 1.
\end{equation*}
Using the following inequality of $\Phi(x)$
\begin{equation}\label{Eq:asy_Phi}
    \min{\{\Phi(x), 1-\Phi(x)\}}\leq {1}/{(\sqrt{2\pi}|x|)}e^{-\frac{x^2}{2}},
\end{equation}
which holds for all $x\neq 0$, we can bound $I_2$ by
\begin{equation}\label{Eq:bd_I2}
    I_2\lesssim {1}/{\sqrt{K}}.
\end{equation}

To bound $I_1$, we first use a change of variables to obtain:
\begin{equation}
    I_1=\int_{|s|\leq \sqrt{\log{K}}}\Phi \left( s \right)\left( 1-\Phi \left( s \right) \right) f_{\tau_{r,K}}(t(s))\cdot \Big( \frac{c^2}{\sqrt{2K}}\big( s+\sqrt{{K}/{2}} \big)^{-2} \Big) ds.
\end{equation}
Then we derive an inequality for $f_{\tau_{r,K}}$ to bound this intergal. Using Stirling's formula for Gamma functions, we have
\begin{equation*}
    {B\Big( \frac{r}{2},\frac{K-r}{2} \Big)}^{-1}=\frac{\Gamma(\frac{K}{2})}{\Gamma(\frac{r}{2})\Gamma(\frac{K-r}{2})}\lesssim K^{\frac{r}{2}}.
\end{equation*}
Since $|s|\leq \sqrt{\log{K}}$ implies $t(s)\sim {c^2}/{K}$, we have 
\begin{equation*}
    f_{\tau_{r,K}}(t(s))\lesssim K^{\frac{r}{2}} \left( \frac{c^2}{K} \right)^{\frac{r}{2}-1} \left( 1-\frac{c^2}{K} \right)^{\frac{K-r}{2}-1} \lesssim K c^{r-2}e^{-\frac{c^2}{2}}.
\end{equation*}
Since $c^re^{-\frac{c^2}{2}}$ is uniformly bounded for all $c>0$, and $|s|\leq \sqrt{\log{K}}$, it holds that
\begin{equation*}
    f_{\tau_{r,K}}(t(s)) \left( \frac{c^2}{\sqrt{2K}}\left( s+\sqrt{{K}/{2}} \right)^{-2} \right)\lesssim K c^{r}e^{-\frac{c^2}{2}}K^{-\frac{3}{2}}\lesssim\frac{1}{\sqrt{K}}.
\end{equation*}
By~\eqref{Eq:asy_Phi}, we have $\int_\bbR \Phi \left( s \right)\left( 1-\Phi \left( s \right) \right)ds <\infty$, and thus $I_1$ can be bounded by
\begin{equation}\label{Eq:bd_I1}
    I_1\leq \int_\bbR \Phi \left( s \right)\left( 1-\Phi \left( s \right) \right)ds\cdot\sup_{|s|\leq \sqrt{\log{K}}}\left| f_{\tau_{r,K}}(t(s))\cdot \left( \frac{c^2}{\sqrt{2K}}\left( s+\sqrt{{K}/{2}} \right)^{-2} \right) \right|
    \lesssim \frac{1}{\sqrt{K}}.
\end{equation}

Finally, by combining~\eqref{Eq:W_diff2}, \eqref{Eq:bd_I2}, and~\eqref{Eq:bd_I1}, we proved that $W_{\mathrm{MC}}(c)-W_K(c)\lesssim \frac{1}{\sqrt{K}}$. Consequently, by \eqref{Eq:V_diff}, we have $V_{\mathrm{MC}}-V_K\lesssim \frac{1}{\sqrt{K}}$.

\end{document}